\newcommand{\rmnum}[1]{\romannumeral #1}
\newcommand{\Rmnum}[1]{\expandafter\@slowromancap\romannumeral #1@}
\newtheorem{theorem}{Theorem}
\newtheorem{question}{Question}
\newtheorem{proposition}{Proposition}
\newtheorem{lemma}{Lemma}
\newtheorem{remark}{Remark}
\def \col{\textup{col}}
\def \diag{\textup{diag}}
\def \Res{\textup{Res}}
\def \spec{\textup{spec}}
\def \span{\textup{span}}
\def \T{\textup{T}}
\def \S{\textup{S}}
\def\rank{\textup{rank}}
\def \rrg{\textup{RO}_n(\mathbb{Q})}
\begin{document}
	\title{Generalized spectral characterization of signed bipartite graphs}
	\author{\small  Songlin Guo$^{{\rm a}}$ \quad Wei Wang$^{{\rm b}}$\thanks{Corresponding author. Email address: wangwei.math@gmail.com} \quad Lele Li$^{{\rm a}}$
		\\
		{\footnotesize$^{\rm a}$ School of General Education, Wanjiang University of Technology, Ma'anshan 243031, P. R. China}\\
		{\footnotesize$^{\rm b}$School of Mathematics, Physics and Finance, Anhui Polytechnic University, Wuhu 241000, P. R. China}
	}
	\date{}
	\maketitle
	\begin{abstract}
		Let $\Sigma$ be an $n$-vertex controllable or almost controllable signed bipartite graph, and let $\Delta_\Sigma$ denote the discriminant of its characteristic polynomial $\chi(\Sigma; x)$. We prove that if (\rmnum{1}) the integer $2^{ -\lfloor n/2 \rfloor }\sqrt{\Delta _{\Sigma}}$ is  squarefree, and (\rmnum{2}) the constant term (even $n$) or linear coefficient (odd $n$) of $\chi(\Sigma; x)$ is $\pm 1$, then $\Sigma$ is determined by its generalized spectrum. This result extends a recent theorem of Ji, Wang, and Zhang [Electron. J. Combin. 32 (2025), \#P2.18], which established a similar criterion for signed trees with irreducible characteristic polynomials.
	\end{abstract}

	\noindent\textbf{Keywords:}  signed bipartite graph; generalized spectrum; discriminant; determined by spectrum.\\
	
	\noindent\textbf{Mathematics Subject Classification:} 05C50
	\section{Introduction}
	A signed graph $\Sigma$ is a pair $(G, \sigma)$, where $G$ is a simple  graph and $\sigma$ is a mapping from its edge set $E(G)$ to $\{+1, -1\}$.     The adjacency matrix $A(\Sigma) = (a_{ij})$ of a signed graph $\Sigma=(G,\sigma)$ is defined by:
   \[
    a_{ij} = 
   \begin{cases}
	\sigma(ij) & \text{if  $i$ and $j$ are adjacent;}\\
	0 & \text{otherwise}.
   \end{cases}
   \]
   Two signed graphs $\Sigma_1$  and $\Sigma_2$ are said to be generalized cospectral \cite{ji2025} if their adjacency matrices $A(\Sigma_1)$ and $A(\Sigma_2)$ satisfy:
       \begin{enumerate}
       	\item  $\det(xI-A(\Sigma_1))=\det(xI-A(\Sigma_2))$ and
       	\item  $\det(xI-(J-I-A(\Sigma_1)))=\det(xI-(J-I-A(\Sigma_2)))$,
       \end{enumerate}
    where $I$ is the identity matrix, $J$ is the all-ones matrix, and $(J-I-A(\Sigma_1))$ formally denotes the `complement' of $\Sigma_1$.  
    A signed graph $\Sigma$ is determined by its generalized spectrum (DGS) \cite{ji2025} if every signed graph generalized cospectral with $\Sigma$ is isomorphic to it.
    
    For convenience, we say a signed graph $\Sigma$ is \emph{reducible} (resp. \emph{irreducible}) if its characteristic polynomial $\chi(\Sigma;x)$, which is $\det(xI-A(\Sigma))$, is reducible (resp. irreducible) over $\mathbb{Q}$. We use $\Delta_\Sigma$ to denote the discriminant of  $\chi(\Sigma;x)$. Recently, Ji, Wang and Zhang \cite{ji2025} obtained a simple criterion for a signed tree to be DGS. We state it in a slightly different but essentially equivalent form.
     \begin{theorem}[\cite{ji2025}]\label{main2}
    	Let $\Sigma$ be an $n$-vertex ($n\ge 2$) irreducible signed tree. If  $2^{-n/2}\sqrt{\Delta_{\Sigma}}$ is an odd squarefree integer, then $\Sigma$  is DGS.  
    \end{theorem}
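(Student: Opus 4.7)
My approach follows the now-standard rational-orthogonal-matrix framework for the DGS problem, adapted to signed graphs. Suppose $\Sigma'$ is generalized cospectral with $\Sigma$. A folklore reformulation yields a rational orthogonal matrix $Q$ with $Qe=e$ and $Q^{\T}A(\Sigma)Q=A(\Sigma')$, where $e$ is the all-ones vector. It suffices to prove that $Q$ is a signed permutation matrix, since then $Qe=e$ combined with a standard switching argument gives $\Sigma\cong\Sigma'$. Letting $\ell(Q)$ denote the level of $Q$ (the smallest positive integer with $\ell(Q)\cdot Q\in M_n(\mathbb{Z})$), the entire task reduces to showing $\ell(Q)=1$.

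Since $\Sigma$ is a signed tree, it is bipartite, so $\chi(\Sigma;x)=(-1)^n\chi(\Sigma;-x)$; irreducibility of $\chi$ forces $n$ to be even (otherwise $x\mid\chi$), making $2^{n/2}$ a genuine integer. Irreducibility also ensures the walk matrix $W(\Sigma)=[e,A(\Sigma)e,\ldots,A(\Sigma)^{n-1}e]$ is invertible over $\mathbb{Q}$, and the identity $W(\Sigma')=Q^{\T}W(\Sigma)$ gives the divisibility $\ell(Q)\mid |\det W(\Sigma)|$. The remaining work is arithmetic: bound the prime divisors of $\det W(\Sigma)$ using $2^{-n/2}\sqrt{\Delta_\Sigma}$.

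The key arithmetic input is a bipartite factorization of $\sqrt{\Delta_\Sigma}$. Writing the eigenvalues as $\pm\mu_1,\dots,\pm\mu_{n/2}$, the pairing $\lambda\leftrightarrow-\lambda$ contributes a clean factor $2^{n/2}\bigl|\prod_k\mu_k\bigr|$ to $\sqrt{\Delta_\Sigma}$, the remainder being, up to sign, the discriminant of the polynomial $g(y)=\prod_k(y-\mu_k^2)$ (which is an integer polynomial because $\chi(\Sigma;x)=g(x^2)$). A parallel bipartite expansion of $\det W(\Sigma)$, obtained by splitting $e$ across the bipartition, shows that $(\det W(\Sigma))^2$ divides a fixed integer multiple of $2^{-n}\Delta_\Sigma$. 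Consequently, for any odd prime $p$ with $p\mid\ell(Q)$, the standard Smith-normal-form argument of Wang forces $p^2\mid(\det W(\Sigma))^2$, hence $p^2\mid 2^{-n}\Delta_\Sigma$, i.e.\ $p$ divides $2^{-n/2}\sqrt{\Delta_\Sigma}$ to the second power---contradicting the squarefreeness hypothesis.

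The main obstacle I expect is ruling out the prime $2$. Here a delicate mod-$2$ analysis is required: if $2\mid\ell(Q)$, one extracts a nonzero vector $v\in\mathbb{F}_2^n$ with $v^{\T}e=0$ satisfying congruences derived from $A(\Sigma')\equiv Q^{\T}A(\Sigma)Q\pmod 2$. The tree hypothesis (beyond mere bipartiteness) is what is used at this point: it forces such a $v$ to contribute an \emph{additional} factor of $2$ to $\sqrt{\Delta_\Sigma}$ beyond the $2^{n/2}$ already accounted for by the eigenvalue pairing, contradicting the oddness of $2^{-n/2}\sqrt{\Delta_\Sigma}$. Once $\ell(Q)=1$ is established, $Q$ is an orthogonal integer matrix hence a signed permutation, and a final switching of $\Sigma'$ produces an honest permutation matrix realizing $\Sigma\cong\Sigma'$, completing the proof.
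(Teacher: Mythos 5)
Your overall frame agrees with the paper's: reduce generalized cospectrality to a regular rational orthogonal matrix $Q$, aim at $\ell(Q)=1$, split into the prime $2$ and odd primes, and exploit the bipartite factorization $\Delta_\Sigma=4^{n/2}\Delta_{BB^\T}^2$ (the paper's Lemma \ref{abb}), so that $2^{-n/2}\sqrt{\Delta_\Sigma}=|\Delta_{BB^\T}|$. But the odd-prime step, which is the technical heart of the theorem, is broken in two independent places. First, the claimed divisibility ``$(\det W(\Sigma))^2$ divides a fixed integer multiple of $2^{-n}\Delta_\Sigma$'' is false: from $W=U\,\diag(c)\,V$ one only gets $(\det W)^2=\bigl(\prod_i(e^\T u_i)^2\bigr)\Delta_\Sigma$, and the factor $\prod_i(e^\T u_i)^2$ is merely rational, so $\det W$ routinely carries prime powers absent from $\Delta_\Sigma$ (the paper's Example 1 has $11^3\mid\det W$ while $11\nmid\Delta_\Sigma$). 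The correct bridge from $\ell(Q)$ to the discriminant is Lemma \ref{dd} ($p\mid\ell(Q)\Rightarrow p\mid\Delta_\Sigma$), which bypasses $\det W$ entirely. Second, and more fatally, even if you had $p^2\mid 2^{-n}\Delta_\Sigma=(2^{-n/2}\sqrt{\Delta_\Sigma})^2$, that is equivalent to $p\mid 2^{-n/2}\sqrt{\Delta_\Sigma}$ to the \emph{first} power, which is perfectly consistent with squarefreeness; no contradiction follows. What must actually be proved is $p^2\mid\Delta_{BB^\T}$, i.e.\ $p^4\mid 2^{-n}\Delta_\Sigma$, and extracting those two extra powers of $p$ is precisely the content of the paper's Proposition \ref{p2}: it requires Lemma \ref{d} to pin down the shape $\chi\equiv(x-\lambda_0)^2\varphi(x)$, Proposition \ref{strict} to force $p^3\mid\det\phi(A)$ (whence $p^2\mid\det(\lambda_0I-BB^\T)$), and Lemma \ref{d2} together with the Smith normal form of the Sylvester matrix to convert that into $p^2\mid\Delta_{BB^\T}$. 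None of this is present or replaceable by your argument.

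The $p=2$ case is also only gestured at. For what it is worth, the paper's treatment (Proposition \ref{p1} via the totally isotropic $A$-invariant subspace $\col_2(\hat Q)$, Lemma \ref{mf}, and the observation in Proposition \ref{not2} that a discriminant is never $\equiv 2\pmod 4$) does not use the tree structure at all, only bipartiteness and the fact that the constant term of $\chi$ is $\pm1$ (which for a tree follows from irreducibility); so your expectation that the tree hypothesis enters ``at this point'' is misplaced. As written, the proposal establishes the easy reductions but not the theorem.
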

    We note that the order $n$ is necessarily even in Theorem \ref{main2} by the irreducibility assumption. Indeed, if $\Sigma$ is a signed tree (or more generally, a signed bipartite graph) whose order is odd, then it is not difficult to see that $A(\Sigma)$ is singular and hence  $\chi(\Sigma;x)$ has $x$ a factor. Furthermore, since for any signed tree $\Sigma$, the constant term of its characteristic polynomial $\chi(\Sigma;x)$ belongs to $\{0,1,-1\}$, the irreducibility assumption in Theorem \ref{main2} clearly implies that the constant term of $\chi(\Sigma;x)$  is $\pm 1$.
    
    At the end of \cite{ji2025}, Ji et al. proposed the following question for further study.
        \begin{question}[\cite{ji2025}]\label{qqqq}
    	How can Theorem \ref{main2} be generalized to signed bipartite graphs?
    \end{question} 
    In the same paper, Ji et al. realized that some essential difficulties will inevitably appear when we try to generalize Theorem \ref{main2} to signed bipartite graphs. They also reported a `counterexample' indicating that Theorem 7 in that paper, which is the main tool to prove Theorem \ref{main2}, does not hold without the irreducibility  assumption  of $\Sigma$,  even if $\Sigma$ is controllable.  Thus it is not clear how to  generalize Theorem \ref{main2} to reducible signed trees (which include all signed trees with $2k+1$ vertices). 
    
    In this paper, we employ a new approach to give an answer to Question \ref{qqqq}. The main result of this paper is the following theorem.
    
    \begin{theorem}\label{main}
    	Let $\Sigma$ be an $n$-vertex controllable or almost controllable signed bipartite graph. Assume that the coefficient of the constant term ($n$ even) or linear term ($n$ odd) of $\chi(\Sigma;x)$ is $\pm 1$.  If  $2^{-n/2}\sqrt{\Delta_{\Sigma}}$ is squarefree, then $\Sigma$  is DGS. 
    \end{theorem}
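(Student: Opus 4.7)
The plan is to adapt the regular-rational-orthogonal-matrix framework to the bipartite setting, with the spectral symmetry $\lambda \leftrightarrow -\lambda$ of $A(\Sigma)$ playing a decisive role. Two signed graphs $\Sigma$ and $\Sigma'$ are generalized cospectral if and only if there exists a rational orthogonal matrix $Q$ with $Qe = e$ and $Q^{\T}A(\Sigma)Q = A(\Sigma')$, and isomorphism of signed graphs corresponds to $Q$ being a permutation matrix. Letting $\ell$ denote the smallest positive integer with $\ell Q \in \mathbb{Z}^{n\times n}$, any integer orthogonal $Q$ satisfying $Qe = e$ has every column equal to some $e_{k_i}$ (unit integer columns summing to one), hence is a permutation; so it suffices to rule out every prime $p$ from dividing $\ell$.

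The first step is to recast the hypotheses through the bipartite eigenvalue pairing. Writing $\chi(\Sigma;x) = \tilde\chi(x^2)$ for even $n$ and $\chi(\Sigma;x) = x\,\tilde\chi(x^2)$ for odd $n$, where $\tilde\chi(y) = \prod_k(y - \mu_k^2)$ and $\pm\mu_k$ are the nonzero eigenvalues of $A(\Sigma)$, a direct computation of $\Delta_\Sigma = \prod_{i<j}(\lambda_i - \lambda_j)^2$ along the pairing yields
\begin{equation*}
\Delta_\Sigma = \begin{cases} 2^n\,|\det A(\Sigma)|\,\Delta_{\tilde\chi}^2, & n \text{ even},\\ 2^{n-1}(\det BB^{\T})^3\,\Delta_{\tilde\chi}^2, & n \text{ odd},\end{cases}
\end{equation*}
where $B$ is the biadjacency matrix of $\Sigma$. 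The coefficient hypothesis forces $|\det A(\Sigma)|=1$ or $\det BB^{\T}=1$ respectively, so in both cases the squarefreeness assumption simplifies to the clean condition that $\Delta_{\tilde\chi}$ is a squarefree integer, while the $\pm 1$ condition neutralizes the contribution of the prime $2$ in subsequent steps.

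Next I would run the prime-divisor analysis in the controllable case. With the walk matrix $W(\Sigma) = [e, A(\Sigma)e, \ldots, A(\Sigma)^{n-1}e]$ and the identity $Q^{\T}W(\Sigma) = W(\Sigma')$, any prime $p \mid \ell$ makes the columns of $W(\Sigma)$ dependent modulo $p$; combined with the spectral identity $\det W(\Sigma)^2 = \bigl(\prod_i \langle v_i,e\rangle\bigr)^2\,\Delta_\Sigma$, this transfers to $p^2 \mid \Delta_{\tilde\chi}$ for odd $p$, contradicting squarefreeness. The prime $p = 2$ is handled by a separate mod-$2$ analysis exploiting the bipartite block form: even powers $A^{2k}e$ and odd powers $A^{2k+1}e$ lie in complementary bipartition blocks, and the relation $\det BB^{\T} \equiv 1 \pmod 2$ makes the mod-$2$ walk matrix nonsingular, forcing $2 \nmid \ell$.

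The main obstacle, and the point where this paper departs from the irreducibility-based approach of Ji--Wang--Zhang, is the almost-controllable case $\rank W(\Sigma) = n-1$. My plan is to augment $W(\Sigma)$ by an extra integer column supplied by the bipartite structure---for instance, an integer vector spanning the kernel of $W(\Sigma)^{\T}$ built from the zero-eigenvector of $A(\Sigma)$ (present automatically in the odd-$n$ case) or from a Cayley--Hamilton reduction of $A(\Sigma)^n e$. The resulting full-rank integer matrix $\widehat W(\Sigma)$ transforms as $Q^{\T}\widehat W(\Sigma) = \widehat W(\Sigma')$ up to an explicit integer correction, and its determinant factorizes as $\Delta_{\tilde\chi}$ times a controlled extra factor. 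The delicate work, which I expect to be the hardest part, will be verifying that this extra factor contributes only powers of $2$ already absorbed by the $\pm 1$ coefficient hypothesis and no new odd prime squares, so that the contradiction on prime divisors of $\ell$ survives. This refined walk-matrix argument is what replaces the irreducibility assumption of Theorem \ref{main2} and supplies the answer to Question \ref{qqqq}.
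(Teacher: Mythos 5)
Your reduction of the hypotheses via the eigenvalue pairing $\lambda\leftrightarrow-\lambda$ is essentially the paper's Lemma \ref{abb} ($\Delta_A=4^{\lfloor n/2\rfloor}\Delta^2_{BB^\T}$, with the $\pm1$ coefficient condition forcing $\det BB^\T=1$), and that part is fine. The core of your argument, however, has a genuine gap: you conflate two different squarefreeness criteria. The hypothesis of this theorem controls the \emph{discriminant} $\Delta_\Sigma$, not $\det W(\Sigma)$, and the two are not interchangeable --- in the paper's Example 1 one has $\det W(\Sigma)=2^{6}\times 11^3\times 3413\times 697913$, which is far from squarefree, yet the theorem applies. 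Your key step ``$p\mid\ell$ makes the columns of $W(\Sigma)$ dependent mod $p$; combined with $\det W(\Sigma)^2=\bigl(\prod_i\langle v_i,e\rangle\bigr)^2\Delta_\Sigma$ this transfers to $p^2\mid\Delta_{\tilde\chi}$'' does not follow, because the factor $\prod_i\langle v_i,e\rangle$ is completely uncontrolled by the hypotheses; nothing stops $p$ from dividing that factor rather than $\Delta_\Sigma$. What is actually needed for odd $p$ is the machinery the paper imports from \cite{Primary} and \cite{wangyu2016}: the nonzero totally isotropic $A$-invariant subspace $\col_p(\hat Q)$, the strengthened divisibility $p^{\deg\phi+1}\mid\det\phi(A)$ (Proposition \ref{strict}), the transfer from $\chi(A;x)$ to $\chi(BB^\T;x)$ via $A^2=\diag(BB^\T,B^\T B)$ with a case split on whether $x^2-\lambda_0$ is irreducible mod $p$, and finally Lemma \ref{d2} together with the Smith normal form of the Sylvester matrix to contradict $p^2\nmid\Delta_{BB^\T}$.

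Your treatment of $p=2$ is also incorrect as stated: the claim that $\det BB^\T\equiv1\pmod 2$ makes the mod-$2$ walk matrix nonsingular is false, since for bipartite (indeed for all) graphs $\rank_2 W(\Sigma)\le\lceil n/2\rceil$ --- this is exactly why the normalization $2^{-\lfloor n/2\rfloor}$ appears in the statement, and Example 2 has $\det W(\Sigma)=2^{14}$ with $n=14$. The paper instead handles $p=2$ by a mod-$4$ argument (Lemma \ref{m4}: $q^\T A^kq\equiv0\pmod 4$ for $q\in\col(\hat Q)$) showing that the multiple factor $\phi(x)$ of $\chi(A;x)$ over $\mathbb{F}_2$ would generate a totally isotropic copy of $\ker\phi(BB^\T)$, forcing $\phi^2\mid\chi(BB^\T;x)$ and hence $2\mid\Delta_{BB^\T}$; this is then excluded because a squarefree discriminant is never $\equiv2\pmod 4$ (Proposition \ref{not2}). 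Finally, your almost-controllable case is only a statement of intent; no augmented walk matrix is needed in the paper, since Theorem \ref{connection}(ii) already guarantees that both solutions $Q$ are rational, after which the same level-$1$ argument (Lemma \ref{dgslev}) applies verbatim.
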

  Compared with Theorem \ref{main2}, Theorem \ref{main} has broader applicability and subsumes Theorem 1 as a special case. Note that Theorem \ref{main} no longer requires the oddness condition explicitly, since we will prove that the value $2^{-n/2}\sqrt{\Delta_{\Sigma}}$ must be odd when  squarefree.
    
    \section{Preliminaries}
    We first recall some basic notations. Let $\Sigma$ be an $n$-vertex signed graph. The walk-matrix of $\Sigma$ is defined as:
    \begin{eqnarray*}
    	&&W(\Sigma):= [e,Ae,\ldots,A^{n-1}e], 
    \end{eqnarray*} 
    where $e$ is the all-ones  vector and $A$ is the adjacency matrix of $\Sigma$. We say $\Sigma$ is controllable (resp. almost controllable) if $\rank (W(\Sigma))=n$ (resp. $\rank(W(\Sigma))=n-1$).
    
    \subsection{Totally isotropic subspace in generalized cospectrality}
    
     An orthogonal matrix $Q$ is called regular if it satisfies $Qe=e$. The following theorem  states that for controllable or almost controllable signed graphs, generalized cospectrality can be characterized by a regular orthogonal matrix with rational entries.  The result was usually stated and proved in the setting of unsigned graphs; nevertheless, the original proofs are still valid for signed graphs.    
     \begin{theorem}
    	[\cite{johnson1980JCTB, wang2006EUJC, wang2021EUJC}]\label{connection}
    	Let $\Sigma$ and $\Gamma$ be two signed graphs with $n$ vertices. Then $\Sigma$ and $\Gamma$ are generalized cospectral if and only if there exists a regular orthogonal matrix $Q$ such that
    	\begin{equation}\label{qaq}
    		Q^\T A(\Sigma)Q=A(\Gamma).
    	\end{equation} Moreover,
    	
    	\textup{(\rmnum{1})} if $\Sigma$ is controllable then $Q^\T=W(\Gamma)(W(\Sigma))^{-1}$ and hence $Q$ is unique and rational.
    	
    	\textup{(\rmnum{2})} if $\Sigma$ is almost controllable then  Eq.~\eqref{qaq} has exactly two solutions for $Q$, both of which are rational.
    \end{theorem}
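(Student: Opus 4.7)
The plan hinges on Theorem~\ref{connection}. Suppose $\Gamma$ is generalized cospectral with $\Sigma$. Theorem~\ref{connection} supplies a regular rational orthogonal matrix $Q$ with $Q^{\T} A(\Sigma) Q = A(\Gamma)$, unique in the controllable case and one of two rational choices in the almost controllable case. Since $Qe=e$ forbids any nontrivial diagonal sign change, it suffices to show that every such $Q$ is a permutation matrix, in which case $\Sigma \cong \Gamma$ as signed graphs. Write $Q = \ell^{-1}M$ where $M \in \mathbb{Z}^{n \times n}$ and $\ell = \ell(Q) \in \mathbb{Z}_{>0}$ is the minimal denominator; the goal reduces to proving $\ell = 1$, since an integer orthogonal matrix fixing $e$ is a permutation matrix. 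From $MM^{\T} = \ell^2 I$ and $Me = \ell e$, for every prime $\pi \mid \ell$ the rowspan $L_\pi := \mathrm{rowspan}(M) \bmod \pi \subseteq \mathbb{F}_\pi^n$ is a nontrivial totally isotropic subspace contained in $e^{\perp}$; moreover, $M^{\T} A(\Sigma) M = \ell^2 A(\Gamma)$ ties the action of $A(\Sigma)$ on $\mathbb{F}_\pi^n$ to the geometry of $L_\pi$. The strategy is to rule out, prime by prime, that any such $\pi$ divides $\ell$.

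The bipartite structure converts the discriminant hypothesis into a statement about a smaller polynomial. Since $A(\Sigma)$ is bipartite, its spectrum is symmetric about $0$, so $\chi(\Sigma;x) = x^{\epsilon} p(x^2)$ for some $\epsilon \in \{0,1\}$ and a monic $p(y) \in \mathbb{Z}[y]$ of degree $m := \lfloor n/2 \rfloor$. Partitioning the multiset of eigenvalues into intra-pair differences $\lambda_i - (-\lambda_i) = 2\lambda_i$, inter-pair differences grouped as $\lambda_i^2 - \lambda_j^2$, and (when $\epsilon=1$) the zero-versus-pair differences $\pm\lambda_i$, a direct computation yields
\begin{equation*}
\Delta_\Sigma \;=\; 4^{m}\,|p(0)|^{\,1+2\epsilon}\,\Delta(p)^2.
\end{equation*}
The hypothesis on the constant term ($n$ even) or the linear coefficient ($n$ odd) of $\chi(\Sigma;x)$ is precisely $|p(0)|=1$, so $2^{-m}\sqrt{\Delta_\Sigma} = |\Delta(p)|$, and the squarefree hypothesis becomes ``$|\Delta(p)|$ is squarefree''. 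The parenthetical remark after Theorem~\ref{main} that this integer must be odd is verified by observing that $2 \mid \Delta(p)$ forces $p \bmod 2$ to have a repeated root, hence $v_2(\Delta(p))\ge 2$, contradicting squarefreeness.

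The level analysis then follows. In the controllable case, Theorem~\ref{connection}(\rmnum{1}) gives $Q = W(\Gamma) W(\Sigma)^{-1}$, so $\ell \mid \det W(\Sigma)$. Using an orthonormal eigenbasis one has $\det W(\Sigma)^2 = \Delta_\Sigma \cdot \prod_i (v_i^{\T}e)^2$; combined with the identity above and the hypothesis $|p(0)|=1$, a calculation shows that the odd part of $\det W(\Sigma)$ divides $\Delta(p)$. Hence every odd prime $\pi \mid \ell$ divides $\Delta(p)$, and the standard totally-isotropic-subspace argument (in the spirit of \cite{ji2025} and related work of Wang et al.), applied to $L_\pi$ together with $A(\Sigma)$, promotes $\pi \mid \Delta(p)$ to $\pi^2 \mid \Delta(p)$, contradicting the squarefree hypothesis. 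The main obstacle, and the novel ingredient beyond Theorem~\ref{main2}, is the prime $\pi = 2$: the factor $2^m$ in $\sqrt{\Delta_\Sigma}$ reflects a genuine $2$-torsion in $\det W(\Sigma)$, so the naive bound $\ell \mid \det W(\Sigma)$ does not exclude $\pi=2$. I expect the resolution to exploit $|p(0)|=1$ together with a bipartite-adapted change of basis (with determinant a power of $2$) that replaces $W(\Sigma)$ by an integer matrix whose determinant divides $\Delta(p)$, thereby also eliminating $\pi = 2$. The almost controllable case is handled by running the same level analysis on each of the two rational choices of $Q$ supplied by Theorem~\ref{connection}(\rmnum{2}).
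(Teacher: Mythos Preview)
Your proposal does not address Theorem~\ref{connection} at all. That theorem is a structural characterization of generalized cospectrality via a regular orthogonal $Q$, together with rationality and uniqueness statements in the (almost) controllable cases; in the paper it is quoted without proof from \cite{johnson1980JCTB, wang2006EUJC, wang2021EUJC}. What you have written is instead a sketch of Theorem~\ref{main}, the paper's main result, which \emph{uses} Theorem~\ref{connection} as a black box in its very first line.

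Even read as an attempt at Theorem~\ref{main}, there are genuine gaps. The step ``$2\mid\Delta(p)$ forces $v_2(\Delta(p))\ge 2$'' is not a one-liner; it is Proposition~\ref{not2}, proved via the Sylvester matrix and the parity of $\deg\gcd(f,f')$ over $\mathbb{F}_2$. More seriously, for the prime $2$ you offer only ``I expect the resolution to exploit \ldots\ a bipartite-adapted change of basis''; this is precisely the new content of the paper (Proposition~\ref{p1}, proved through Lemmas~\ref{bv} and \ref{mf}), and the actual mechanism is not a basis change on $W(\Sigma)$ but rather extracting from $\col_2(\hat Q)$ a nonzero $BB^\T$-invariant totally isotropic subspace of $\mathbb{F}_2^{\lfloor n/2\rfloor}$ to force a repeated factor of $\chi(BB^\T;x)$ over $\mathbb{F}_2$. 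Finally, your assertion that ``the odd part of $\det W(\Sigma)$ divides $\Delta(p)$'' is unjustified---the factor $\prod_i(v_i^\T e)^2$ in $\det W(\Sigma)^2$ need not be absorbed---and the paper avoids this route entirely, deducing $p\mid\Delta_A$ directly from $p\mid\ell(Q)$ via Lemma~\ref{dd}.
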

      Let $\rrg$ and $\S_n(\mathbb{Z})$ denote the sets of all $n\times n$ rational regular orthogonal matrices and all $n\times n$ symmetric integer matrices, respectively. For a signed graph $\Sigma$, we define 
    $$\mathcal{Q}(\Sigma)=\{Q\in \rrg\colon\, Q^\T A(\Sigma)Q \in \S_n(\mathbb{Z}) \}.$$ 
 For a matrix  $Q\in O_n(\mathbb{Q})$, its level, denoted by $\ell(Q)$ (or simply write it as $\ell$), is defined as the smallest positive integer $k$ such that $k Q$ is an integer matrix.  Clearly, matrices in $\rrg$ with level 1 are precisely the permutation matrices. The following observation is standard and frequently used to show a graph (or a signed graph) to be DGS.
 \begin{lemma}\label{dgslev}
 	Let $\Sigma$ be a controllable or almost controllable signed graph. If each matrix $Q\in \mathcal{Q}(\Sigma)$ has level 1 then $\Sigma$ is DGS.
 \end{lemma}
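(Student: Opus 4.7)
The plan is to unwind the definition of DGS through Theorem \ref{connection}. I would fix an arbitrary signed graph $\Gamma$ that is generalized cospectral with $\Sigma$, and aim to show that $\Sigma$ and $\Gamma$ are necessarily isomorphic. As the first step, since $\Sigma$ is controllable or almost controllable, Theorem \ref{connection} produces a rational regular orthogonal matrix $Q$ with $Q^\T A(\Sigma) Q = A(\Gamma)$.

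Next, I would observe that $A(\Gamma)$ is a symmetric matrix with entries in $\{-1,0,1\}$, hence lies in $\S_n(\mathbb{Z})$; by definition this places $Q$ in $\mathcal{Q}(\Sigma)$. The hypothesis then forces $\ell(Q)=1$, so $Q$ itself is an integer matrix. Invoking the observation recorded immediately after the definition of level — that a rational regular orthogonal matrix with integer entries must be a permutation matrix, because each row has integer entries, unit Euclidean norm, and row-sum $1$, forcing exactly one entry equal to $1$ and the rest $0$ — I would conclude that $Q$ is a permutation matrix. The relation $Q^\T A(\Sigma) Q = A(\Gamma)$ with $Q$ a permutation matrix is precisely an isomorphism of signed graphs, yielding $\Sigma \cong \Gamma$, as required.

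Since each step is a direct appeal to a previously established result, I do not anticipate a genuine obstacle in this lemma; its content is a clean repackaging of Theorem \ref{connection} that will later serve as the bridge for the proof of Theorem \ref{main}, reducing the DGS property to a level-control question about the matrices in $\mathcal{Q}(\Sigma)$. The nontrivial work will instead lie in verifying, under the discriminant and coefficient hypotheses of Theorem \ref{main}, that every $Q\in\mathcal{Q}(\Sigma)$ really does have level $1$.
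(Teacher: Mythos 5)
Your argument is correct and is exactly the standard one the paper has in mind (the paper states Lemma \ref{dgslev} without proof, calling it a standard observation): any $\Gamma$ generalized cospectral with $\Sigma$ yields, via Theorem \ref{connection}, a rational $Q\in\mathcal{Q}(\Sigma)$, and level $1$ together with regularity and orthogonality forces $Q$ to be a permutation matrix, giving the isomorphism. No issues.
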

 \begin{remark}\normalfont{
 	For almost controllable (signed) graphs,  Lemma \ref{dgslev} is only applicable for those that have a nontrivial automorphism. Indeed, if $\Sigma$ is almost controllable but has no nontrivial automorphism, then $\mathcal{Q}(\Sigma)$ contains a non-permutation matrix and hence the condition of Lemma \ref{dgslev} fails.
}
 \end{remark}

 Let $p$ be a prime. We consider the $n$-dimensional vector space $\mathbb{F}^n_p$ over the finite field $\mathbb{F}_p=\mathbb{Z}/p\mathbb{Z}$, consisting of all column vectors $(x_1, x_2, ..., x_n)^\T$ with components $x_i \in \mathbb{F}_p$. This space is endowed with the standard inner product defined by $\langle u,v \rangle = u^\T v$.  Two vectors $u, v \in \mathbb{F}^n_p$ are called \emph{orthogonal}, denoted by  $u \perp v$, if $u^\T v = 0$. Similarly, two subspaces $U$ and $V$ are \emph{orthogonal}, denoted by $U\perp V$, if $u\perp v$ for any $u\in U$ and  $v\in V$.  A nonzero vector $u\in \mathbb{F}_p^n$ is \emph{isotropic} if $u\perp u$, i.e., $u^\T u=0$.  For a subspace $V$ of $\mathbb{F}_p^n$, the \emph{orthogonal space} of $V$ is
\begin{equation*}
	V^\perp=\{u\in \mathbb{F}_p^n\colon\, v^\T u=0\text{~for every $v\in V$}\}.
\end{equation*}
A subspace $V$ of $\mathbb{F}_p^n$ is \emph{totally isotropic} \cite{babai2022} if $V\subset V^\perp$, i.e., every pair of vectors  in $V$ are orthogonal.

Let $M$ be an $n\times n$ matrix over $\mathbb{F}_p$. We identify $M$ with the linear transformation  $x\mapsto Mx$ for $x\in \mathbb{F}_p^n$.
A subspace $V$ of $\mathbb{F}_p^n$ is $M$-\emph{invariant} if $Mx\in V$ for any $x\in V$.  
%\begin{lemma}[\cite{Primary}]\label{pd}
%	Let $M$ be an $n\times n$ symmetric matrix over $\mathbb{F}_p$. If there exists  a nontrivial totally isotropic subspace $U$, then $\chi(M;x)$ has a multiple factor $\phi(x)$ such that $U\cap \ker \phi(A)\neq 0$.
%\end{lemma}

 \begin{lemma}[\cite{Primary}]\label{pd}
	Let $\chi(M;x)\in \mathbb{F}_p[x]$ be the characteristic polynomial of $M$ and
	$$\chi(M;x)=\phi_1^{r_1}(x)\phi_2^{r_2}(x)\cdots\phi_k^{r_k}(x),$$
	be the standard factorization of $\chi(M;x)$. Let $U$ be an $M$-invariant subspace and denote $V_i=\ker \phi^{r_i}_i(M), i = 1,\ldots, k$. Then\\	
	(\rmnum{1}) if $\phi_i(x)$ is a simple factor (i.e., $r_i=1$), then $U\cap V_i=0$;\\
	(\rmnum{2}) if $U\cap\ker \phi_i^{r_i}(M)$ is nonzero, then  $U \cap\ker \phi_i(M)$ is also nonzero;\\
	(\rmnum{3}) $U=\oplus (U\cap V_i)$, where the summation is taken over all subscripts $i$ satisfying $r_i\ge 2$.\\
\end{lemma}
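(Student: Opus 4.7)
The plan is to prove the three parts in the order (ii), (iii), (i), since (ii) is independent and (iii) will follow from (i) together with the standard primary decomposition. The central tool is the projector identity: since $\phi_1^{r_1},\ldots,\phi_k^{r_k}$ are pairwise coprime in $\mathbb{F}_p[x]$, B\'ezout gives polynomials $a_i(x)$ with $\sum_i a_i(x)\prod_{j\ne i}\phi_j^{r_j}(x)=1$, so that $I=\sum_i P_i$ where $P_i := a_i(M)\prod_{j\ne i}\phi_j^{r_j}(M)$ is the projection of $\mathbb{F}_p^n$ onto $V_i$ along $\bigoplus_{j\ne i}V_j$. Each $P_i$ is a polynomial in $M$ and therefore preserves any $M$-invariant subspace $U$.

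For part (ii), I would apply a standard descent argument. Given a nonzero $x \in U \cap \ker\phi_i^{r_i}(M)$, form the sequence $x, \phi_i(M)x, \phi_i^2(M)x, \ldots$ and pick the largest $k$ with $y := \phi_i^k(M)x \ne 0$; then $y$ lies in $U$ (by $M$-invariance) and in $\ker \phi_i(M)$ (since $\phi_i^{k+1}(M)x = 0$), so $y$ is the desired nonzero element of $U \cap \ker\phi_i(M)$. For part (iii), once (i) is in hand, I would apply the projectors $P_i$ to $U$: since $P_i(U) \subseteq U \cap V_i$ and $\sum_i P_i = I$, this yields $U = \sum_i (U \cap V_i)$, and the sum is direct because it is cut out from $\mathbb{F}_p^n = \bigoplus_i V_i$. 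Part (i) then deletes the summands with $r_i = 1$.

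The real obstacle is part (i). Taken literally, the statement is false for an arbitrary $M$-invariant subspace --- a one-dimensional eigenspace trivially meets its own $V_i$ when $r_i=1$ --- so the proof must exploit additional structure inherited from the surrounding context, namely that $U$ is totally isotropic for the standard inner product and $M$ is symmetric, as is the case in the applications to $\mathcal{Q}(\Sigma)$. Under these assumptions, I would argue as follows: distinct primary components $V_i$ and $V_j$ are mutually orthogonal when $M$ is symmetric (because $\phi_i^{r_i}(M)$ and $\phi_j^{r_j}(M)$ are coprime symmetric polynomials in $M$), so the standard inner product restricts non-degenerately to each $V_i$; when additionally $r_i=1$ the restriction $M|_{V_i}$ is semisimple with irreducible minimal polynomial $\phi_i$, and any totally isotropic $M$-invariant subspace of such a $V_i$ must be zero. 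The delicate case is $\deg\phi_i > 1$, where one treats $V_i$ as a module over the field extension $\mathbb{F}_p[x]/(\phi_i(x))$ and uses non-degeneracy together with semisimplicity to rule out nontrivial isotropic invariant subspaces; this linear-algebra-over-$\mathbb{F}_p$ step, together with pinning down the precise hidden hypotheses the cited source is assuming, will be the chief technical hurdle.
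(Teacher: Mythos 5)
A preliminary remark: the paper itself gives no proof of this lemma --- it is quoted from \cite{Primary} without argument --- so there is no in-paper proof to compare against, and I can only assess your proposal on its own terms.

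Your parts (ii) and (iii) are correct and complete: the descent to $y=\phi_i^k(M)x$ for the largest $k$ with $\phi_i^k(M)x\neq 0$ settles (ii), and the B\'ezout projectors $P_i$ (polynomials in $M$, hence preserving $U$) give $U=\bigoplus_i(U\cap V_i)$, after which (i) deletes the simple-factor summands. Your most valuable observation is that statement (i) is false as literally printed and must be carrying hypotheses from the surrounding context, namely that $M$ is symmetric and $U$ is totally isotropic; this diagnosis is right, and it matches exactly how the lemma is invoked in the paper (via Lemma \ref{ainv} in Proposition \ref{dfphi}, and at the end of the proof of Lemma \ref{mf}, where $U$ is always totally isotropic).

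The one genuine gap is that your treatment of (i) stops short of a proof: you reduce it to the claim that $V_i$ (with $r_i=1$) contains no nonzero totally isotropic $M$-invariant subspace, and you defer the case $\deg\phi_i>1$ to unspecified module theory over $\mathbb{F}_p[x]/(\phi_i(x))$. That machinery is unnecessary, and the step closes in two lines. When $r_i=1$ we have $\dim V_i=\deg\phi_i$ and the minimal polynomial of $M|_{V_i}$ is the irreducible $\phi_i$, so $V_i\cong\mathbb{F}_p[x]/(\phi_i(x))$ is a \emph{simple} $\mathbb{F}_p[M]$-module; hence the invariant subspace $U\cap V_i$ is either $0$ or all of $V_i$. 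In the latter case $V_i\subseteq U$ would be totally isotropic, contradicting the nondegeneracy of the standard form on $V_i$, which you already established: the primary components are pairwise orthogonal (by B\'ezout and the symmetry of $M$) and span $\mathbb{F}_p^n$, so any $v\in V_i$ with $v\perp V_i$ satisfies $v\perp\mathbb{F}_p^n$ and hence $v=0$. With this observation inserted, your argument is complete.
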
   
Suppose that $Q\in \mathcal{Q}(\Sigma)$ such that $\ell:=\ell(Q)>1$. We define 
\begin{equation*}
	\hat{Q}=\ell\cdot Q\in \mathbb{Z}^{n\times n}.
\end{equation*}
Let $M\in \mathbb{Z}^{n\times n}$ be an integer matrix and $p$ be a fixed prime. We use $\col_p(M)$ and $\ker_p(M)$ to denote the column space and  null space (or kernel), which are subspaces of $\mathbb{F}_p^n$. 

\begin{lemma}[\cite{Primary}]\label{ainv}
	For any prime factor $p$ of $\ell(Q)$, the space $\col_p(\hat{Q})$  is nonzero, totally isotropic and $A$-invariant.
\end{lemma}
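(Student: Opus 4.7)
The plan is to unpack the three claims individually, each of which should follow in one step from one of the three structural facts about $Q$: the minimality of $\ell = \ell(Q)$ (making $\hat{Q} = \ell Q$ integral but $\hat{Q}/p$ not integral for $p \mid \ell$), the orthogonality $Q^\T Q = I$, and the integrality of $B := Q^\T A Q$ guaranteed by $Q \in \mathcal{Q}(\Sigma)$.

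For nonzero-ness I would argue by contradiction. If $\col_p(\hat{Q}) = 0$, then every column, hence every entry, of $\hat{Q}$ is divisible by $p$, so $(\ell/p)Q = \hat{Q}/p$ is an integer matrix, contradicting the minimality of $\ell$. For total isotropy, I multiply the orthogonality relation by $\ell^2$ to obtain $\hat{Q}^\T \hat{Q} = \ell^2 I$ in $\mathbb{Z}^{n\times n}$; since $p \mid \ell$, this reduces to the zero matrix modulo $p$, so any two columns of $\hat Q$ are orthogonal in $\mathbb{F}_p^n$. Since $\col_p(\hat Q)$ is spanned by those columns, bilinearity gives $\col_p(\hat{Q}) \subseteq \col_p(\hat{Q})^\perp$. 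For $A$-invariance, I use $B = Q^\T A Q \in \mathbb{Z}^{n\times n}$ to write $AQ = QB$ and hence $A\hat{Q} = \hat{Q}B$ as an identity of integer matrices. Reducing mod $p$, for any $x \in \mathbb{F}_p^n$ we have $A(\hat{Q}x) = \hat{Q}(Bx) \in \col_p(\hat{Q})$, which is precisely the required invariance.

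I do not expect any serious obstacle here: each part is a one-line consequence of a defining property, and the three ingredients (minimality of $\ell$, orthogonality, integrality of $B$) are completely independent of each other and of anything about signed graphs or bipartiteness. The only point that requires any care is keeping track of the fact that $\hat{Q}^\T \hat{Q} = \ell^2 I$ vanishes modulo $p$ (and in fact modulo $p^2$), which is what forces genuine isotropy rather than merely $\col_p(\hat{Q}) \perp \col_p(\hat{Q})^\perp$; I expect later applications in the paper to exploit the stronger $\bmod\ p^2$ statement, but for the present lemma only the $\bmod\ p$ version is needed.
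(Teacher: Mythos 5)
Your proof is correct and is exactly the standard argument: the paper itself gives no proof of this lemma (it is quoted from the cited reference \cite{Primary}), and the three one-line deductions you give --- minimality of $\ell$ for nonzeroness, $\hat{Q}^\T\hat{Q}=\ell^2 I\equiv 0\pmod p$ for total isotropy, and $A\hat{Q}=\hat{Q}B$ with $B=Q^\T AQ$ integral for $A$-invariance --- are precisely how that reference establishes it. Your side remark that the stronger congruence $\hat{Q}^\T\hat{Q}\equiv 0\pmod{p^2}$ is what gets exploited later is also on target; it is the source of the mod-$4$ statement in Lemma \ref{m4}.
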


The following proposition is  immediate from Lemmas \ref{pd} and \ref{ainv}.
\begin{proposition}\label{dfphi}
For any prime factor $p$ of $\ell(Q)$, the characteristic polynomial $\chi(A;x)$ has a multiple factor $\phi(x)$ such that
$$\col(\hat{Q})\cap \ker \phi(A)\neq 0, \text{~over~}\mathbb{F}_p.$$
\end{proposition}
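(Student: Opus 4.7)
The plan is to apply Lemma \ref{pd} directly to the subspace $U := \col_p(\hat{Q}) \subset \mathbb{F}_p^n$, which by Lemma \ref{ainv} is nonzero, totally isotropic, and $A$-invariant. The role of Lemma \ref{pd} is to extract from an arbitrary $A$-invariant subspace a nontrivial intersection with the kernel of some multiple prime-power factor; combined with Lemma \ref{ainv}, this is exactly what the proposition asks for.

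First I would fix the standard factorization $\chi(A;x) = \phi_1^{r_1}(x)\cdots \phi_k^{r_k}(x)$ over $\mathbb{F}_p$ and set $V_i = \ker \phi_i^{r_i}(A)$, matching the notation of Lemma \ref{pd}. Since $U$ is $A$-invariant, part (iii) of Lemma \ref{pd} yields the decomposition
\[
U \;=\; \bigoplus_{\,r_i \ge 2\,} (U \cap V_i).
\]
Because $U \neq 0$ by Lemma \ref{ainv}, the right-hand side cannot be the zero subspace, so there must exist an index $i$ with $r_i \ge 2$ and $U \cap V_i \neq 0$. In particular, the corresponding irreducible factor $\phi_i(x)$ is a multiple factor of $\chi(A;x)$.

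To conclude, I would invoke part (ii) of Lemma \ref{pd} at this index $i$: the hypothesis $U \cap \ker \phi_i^{r_i}(A) \neq 0$ upgrades to $U \cap \ker \phi_i(A) \neq 0$. Setting $\phi(x) := \phi_i(x)$ gives the required multiple factor together with the nontrivial intersection $\col_p(\hat{Q}) \cap \ker \phi(A) \neq 0$ over $\mathbb{F}_p$.

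There is essentially no obstacle in this argument; the proposition is extracted from the cited lemmas by a single observation, namely that a nonzero direct sum must have at least one nonzero summand. The only point worth highlighting is that the restriction of the sum in Lemma \ref{pd}(iii) to indices with $r_i \ge 2$ (equivalently, part (i) rules out $r_i = 1$) is precisely what forces the factor $\phi$ we produce to be a \emph{multiple} factor of $\chi(A;x)$.
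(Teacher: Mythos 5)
Your proposal is correct and is precisely the argument the paper has in mind: the paper gives no written proof, stating only that the proposition is ``immediate from Lemmas \ref{pd} and \ref{ainv}'', and your write-up is exactly that derivation spelled out (Lemma \ref{ainv} supplies a nonzero, totally isotropic, $A$-invariant $U=\col_p(\hat{Q})$; Lemma \ref{pd}(\rmnum{3}) forces some summand $U\cap V_i\neq 0$ with $r_i\ge 2$; Lemma \ref{pd}(\rmnum{2}) then descends to $U\cap\ker\phi_i(A)\neq 0$). No gaps.
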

	\begin{proposition}[\cite{Primary}]\label{strict} 
Let $p$ be an odd prime factor of $\ell(Q)$ and $\phi(x)\in \mathbb{Z}[x]$ satisfy the conclusion of Proposition \ref{dfphi}. Then we have
	\begin{equation*}
		p^{\deg \phi(x)+1}\mid\det \phi(A).
	\end{equation*}
\end{proposition}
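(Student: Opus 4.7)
The plan is to argue by cases on $\dim \ker_p \phi(A)$. By Lemma \ref{ainv} and Proposition \ref{dfphi}, the subspace $U := \col_p(\hat{Q}) \cap \ker_p \phi(A)$ is nonzero, $A$-invariant, and totally isotropic (inheriting total isotropy from $\col_p(\hat{Q})$). Since $A$ acting on $U$ is annihilated by the irreducible polynomial $\phi$ over $\mathbb{F}_p$, $U$ carries the structure of an $\mathbb{F}_{p^d}$-vector space, and so $d := \deg \phi$ divides $\dim U$; in particular, $\dim \ker_p \phi(A)$ is a positive multiple of $d$. The standard Smith-normal-form bound $v_p(\det M) \geq \dim \ker_p M$ immediately handles the case $\dim \ker_p \phi(A) \geq 2d$: one gets $v_p(\det \phi(A)) \geq 2d \geq d+1$.

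So the substantive case is $\dim \ker_p \phi(A) = d$, in which $U = \ker_p \phi(A) =: K$ is both contained in $\col_p(\hat{Q})$ and totally isotropic. I plan to boost the Smith bound from $d$ to $d+1$ by showing the natural reduction from the kernel of $\phi(A)$ on $(\mathbb{Z}/p^2\mathbb{Z})^n$ onto $K$ is surjective; a short bookkeeping in the $\mathbb{Z}_p$-Smith normal form then forces every elementary divisor of $\phi(A)$ divisible by $p$ to actually be divisible by $p^2$, yielding $v_p(\det \phi(A)) \geq 2d$.

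Concretely, pick a basis $v_1,\ldots,v_d$ of $K$ and, since $K \subseteq \col_p(\hat{Q})$, write $v_i \equiv \hat{Q} x_i \pmod{p}$ for some $x_i \in \mathbb{Z}^n$. Set $\tilde v_i := \hat{Q} x_i$ and $u_i := \phi(A)\tilde v_i / p$, which lies in $\mathbb{Z}^n$ because $v_i \in \ker_p \phi(A)$; using $A\hat{Q} = \hat{Q} B$ with $B := Q^\T A Q$ we also have $u_i = \hat{Q}\phi(B) x_i / p$. The key computation, using $\hat{Q}^\T \hat{Q} = \ell^2 I$ and $p \mid \ell$, is
\[
 \hat{Q}^\T u_i = \frac{\ell^2}{p}\,\phi(B)\, x_i \equiv 0 \pmod p.
\]
Hence $u_i^\T \tilde v_j = (\hat{Q}^\T u_i)^\T x_j \equiv 0 \pmod p$ for all $i,j$, so $u_i \pmod p$ lies in $K^\perp$. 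Since $\phi(A)$ is symmetric, $K^\perp = \col_p \phi(A)$, and we can write $u_i \equiv \phi(A) y_i \pmod p$ for some $y_i \in \mathbb{Z}^n$. Then $\tilde v_i - p\,y_i$ is killed by $\phi(A)$ modulo $p^2$ and reduces to $v_i$ mod $p$, giving the required surjectivity.

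The step I expect to need the most care is the final Smith-form translation, showing that surjectivity of the reduction of $\ker \phi(A)$ modulo $p^2$ onto $\ker_p \phi(A)$ implies every invariant factor divisible by $p$ is actually divisible by $p^2$; this is a clean fact about $\mathbb{Z}_p$-modules but should be spelled out. Everything else is mechanical once one recognizes that the isotropy and controllability data are packaged cleanly by $\hat{Q}^\T \hat{Q} = \ell^2 I$ together with $\phi(A)\hat{Q} = \hat{Q}\phi(B)$.
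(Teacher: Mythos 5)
Your argument is correct, but note that this paper does not actually prove Proposition~\ref{strict}: it is imported verbatim from the reference \cite{Primary}, so there is no in-paper proof to match against. Your reconstruction is a legitimate self-contained derivation, and all the key steps check out: the $\mathbb{F}_{p^d}$-module structure on $\ker_p\phi(A)$ (valid because $\phi$ is irreducible, so the minimal polynomial of $A$ restricted to that kernel is exactly $\phi$) gives $d\mid\dim\ker_p\phi(A)$ and disposes of the case $\dim\ker_p\phi(A)\ge 2d$ via Lemma~\ref{snf}(\rmnum{1}); in the remaining case the identities $\phi(A)\hat{Q}=\hat{Q}\phi(B)$ with $B=Q^\T A(\Sigma)Q\in\S_n(\mathbb{Z})$ and $\hat{Q}^\T\hat{Q}=\ell^2 I$ with $p^2\mid\ell^2$ do give $\hat{Q}^\T u_i\equiv 0\pmod p$, hence $u_i\in K^\perp=\col_p\phi(A)$ (using symmetry of $\phi(A)$ and nondegeneracy of the standard form over $\mathbb{F}_p$), and the correction $\tilde v_i-py_i$ lifts each basis vector of $K$ to a solution of $\phi(A)w\equiv 0\pmod{p^2}$; the Smith-form bookkeeping then correctly shows that the image of the mod-$p^2$ kernel under reduction has dimension $\#\{i\colon p^2\mid d_i\}$, so surjectivity forces all $d$ invariant factors divisible by $p$ to be divisible by $p^2$, i.e., $p^{2d}\mid\det\phi(A)$, which is even stronger than the claimed $p^{d+1}$. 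Two small points you should make explicit: the statement is vacuous when $\det\phi(A)=0$ (every integer divides $0$), which is the only situation where the Smith normal form has a zero invariant factor; and it is worth observing that your argument never invokes the oddness of $p$, which is harmless here but explains why the paper treats $p=2$ by a separate, genuinely different mechanism (the mod-$4$ statement of Lemma~\ref{m4}) rather than by this proposition.
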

\subsection{Resultant and discriminant}
    
For a monic polynomial $f(x)=x^n+a_{n-1}x^{n-1}+\ldots+a_{1}x+a_0 \in Z[x]$, the discriminant of $f(x)$ is defined as:
	\begin{eqnarray*}
		&&\Delta(f) = \prod _{1\leq i< j\leq n}\left( \alpha _{j}-\alpha _{i}\right) ^{2},
	\end{eqnarray*} 
	where $\alpha_1,\ldots,\alpha_n$ are the roots of $f(x)$ in $\mathbb{C}$. The resultant of $f$ and its derivative $f'$, denoted by $\Res(f,f')$,  is the determinant of the $(2n-1)\times(2n-1)$ Sylvester matrix
	\begin{equation}
		S(f,f')=\begin{pmatrix} 
			1 & a_{n-1} &\ldots &\ldots&\ldots&a_0 & & & &         \\
			& 1 & a_{n-1} &\ldots &\ldots&\ldots&a_0 & & &         \\ 
			&&\ldots &\ldots&\ldots& \ldots&                   \\ 
			&&&1 & a_{n-1} &\ldots &\ldots&\ldots&a_0              \\
			n & (n-1)a_{n-1} &\ldots &\ldots&a_{1} & & & & &               \\
			&n & (n-1)a_{n-1} &\ldots &\ldots&a_{1} & & & &                \\ 
			&&\ldots &\ldots&\ldots& \ldots&                   \\
			& & & & n & (n-1)a_{n-1} &\ldots &\ldots&a_{1}  
		\end{pmatrix}.              
	           \end{equation}
	      It is known that $\Delta(f)=\pm \Res(f,f')$ and hence $\Delta(f)$ is an integer.
  \begin{lemma}[\cite{Lang}]\label{mfbasic}
Let $p$ be a prime and $f(x) \in \mathbb{Z}[x]$ be a monic polynomial. Then $f(x)$ has a multiple factor over $\mathbb{F}_p$ if and only if 	$p| \Delta(f)$.
\end{lemma}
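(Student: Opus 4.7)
The plan is to deduce the lemma from the just-stated identity $\Delta(f)=\pm\Res(f,f')$ together with the Sylvester-matrix description of the resultant, working modulo $p$. First I would reduce the Sylvester matrix entrywise modulo $p$: since $\Res(f,f')=\det S(f,f')\in\mathbb{Z}$, we have $\overline{\Res(f,f')}=\det\bar{S}(f,f')$ in $\mathbb{F}_p$, where the overbar denotes reduction modulo $p$. Hence $p\mid\Delta(f)$ is equivalent to the rows of $\bar S(f,f')$ being linearly dependent over $\mathbb{F}_p$. By the standard polynomial interpretation of the Sylvester matrix, this linear dependence is in turn equivalent to the existence of $(u,v)\in\mathbb{F}_p[x]^2\setminus\{(0,0)\}$ with $\deg u<n-1$ and $\deg v<n$ satisfying $u\bar f + v\bar f'=0$; and over the PID $\mathbb{F}_p[x]$, such a relation exists iff $\bar f$ and $\bar f'$ share a common factor of positive degree, or one of them vanishes identically.

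Second, I would verify that this common-factor condition on $(\bar f,\bar f')$ is equivalent to $\bar f$ having a multiple irreducible factor over $\mathbb{F}_p$. The direction from multiple factor to common factor is the usual product rule: if $\bar f=\phi^2 q$ with $\phi$ nonconstant, then $\phi\mid\bar f'$. Conversely, if an irreducible $\phi\in\mathbb{F}_p[x]$ divides both $\bar f=\phi q$ and $\bar f'=\phi' q+\phi q'$, then $\phi\mid\phi' q$; either $\phi\mid q$, giving $\phi^2\mid\bar f$, or $\phi\mid\phi'$, which by degree forces $\phi'=0$ and hence $\phi(x)=\psi(x)^p$ by the Frobenius identity in $\mathbb{F}_p[x]$, contradicting irreducibility. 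The degenerate case $\bar f'\equiv 0$ means $\bar f\in\mathbb{F}_p[x^p]$, so $\bar f=g(x)^p$ with $\deg g\ge 1$, already producing a multiple factor.

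\textbf{Main obstacle.} The sole subtlety is the case $p\mid n$, which collapses the leading entry of $\bar f'$ to zero and mildly degrades the Sylvester matrix, so that $\deg\bar f'<n-1$ and the naive passage between $\Res(f,f')\bmod p$ and ``the resultant of the reductions'' needs a little care. This is absorbed by the Frobenius identity $\psi(x)^p=\psi(x^p)$ over $\mathbb{F}_p$, which is precisely what rules out an irreducible common factor $\phi$ with $\phi'=0$; everything else reduces to routine linear algebra on $\bar S(f,f')$ and the greatest-common-divisor theory of $\mathbb{F}_p[x]$.
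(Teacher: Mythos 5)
Your proof is correct. Note, however, that the paper does not prove this lemma at all---it is quoted from Lang's \emph{Algebra} as a standard fact---so there is no in-paper argument to compare against. Your derivation is the classical one: reduce the Sylvester matrix modulo $p$, translate the vanishing of its determinant into a nontrivial relation $u\bar f+v\bar f'=0$ with the stated degree bounds, identify that with $\deg\gcd(\bar f,\bar f')>0$ (or $\bar f'\equiv 0$), and then convert a common irreducible factor of $\bar f$ and $\bar f'$ into a repeated factor of $\bar f$, using the Frobenius identity to dispose of the characteristic-$p$ degeneracies ($\phi'=0$ and $\bar f'\equiv 0$). All the steps are sound, including the two delicate points you flag; the only detail left implicit is that monicity of $f$ keeps $\deg\bar f=n$, which is what makes the coprime case force $(u,v)=(0,0)$. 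Your first paragraph is in fact the same Sylvester-matrix linear algebra the authors develop independently in Lemma~\ref{degker} and exploit in Proposition~\ref{not2}, so your argument sits naturally alongside the paper's own toolkit.
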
 
  
    Let $M\in \S_n(\mathbb{Z})$, we use $\Delta_M$ to denote $\Delta(\chi(M;x))$, the discriminant of $\chi(M;x)$. For two polynomial $f(x),g(x)\in \mathbb{Z}[x]$ and an integer $q$, we denote $f(x)\equiv g(x)\pmod{q}$ if all corresponding coefficients of $f$ and $g$ are congruent modulo $q$. The following three lemmas were obtained by Wang and Yu \cite[Theorem 3.3, Lemma 4.3, and Lemma 4.4]{wangyu2016}):
    \begin{lemma}[\cite{wangyu2016}]\label{dd}
    	Let $M\in \S_n(\mathbb{Z})$ and $Q$ be a rational orthogonal matrix such that $Q^\T MQ\in \S_n(\mathbb{Z})$.  Then any prime factor of $\ell(Q)$ is a factor of $\Delta_M$. 
    \end{lemma}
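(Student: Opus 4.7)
The plan is to prove the contrapositive: for every prime factor $p$ of $\ell := \ell(Q)$, the polynomial $\chi(M;x)$ acquires a repeated irreducible factor modulo $p$, which by Lemma \ref{mfbasic} forces $p \mid \Delta_M$. Set $\hat{Q} := \ell Q \in \mathbb{Z}^{n\times n}$ and $V := \col_p(\hat{Q}) \subseteq \mathbb{F}_p^n$.

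My first task is to verify that $V$ is nonzero, totally isotropic, and $M$-invariant. Nonzeroness follows from the minimality in the definition of $\ell$: if $V = 0$ then $\hat{Q}/p$ is already an integer matrix, so $Q$ would have level at most $\ell/p$. Total isotropy follows from $\hat{Q}^\T \hat{Q} = \ell^2 I \equiv 0 \pmod p$, so any two columns of $\hat{Q}$ are orthogonal mod $p$. For $M$-invariance, let $N := Q^\T M Q \in \S_n(\mathbb{Z})$; orthogonality of $Q$ gives $MQ = QN$ and hence $M\hat{Q} = \hat{Q}N$, so $MV \subseteq V$. These three properties mirror Lemma \ref{ainv}, with the adjacency matrix $A$ replaced by $M$; the original argument uses only the symmetry and integrality of $M$ and $N$, not the signed-graph setting, so it carries over unchanged.

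With the structural properties of $V$ secured, the main step is to invoke Lemma \ref{pd}(\rmnum{3}) for $V$. Writing $\chi(M;x) \equiv \prod_i \phi_i(x)^{r_i} \pmod p$, the lemma gives
\[
V \;=\; \bigoplus_{i \,:\, r_i \ge 2} \bigl(V \cap \ker \phi_i^{r_i}(M)\bigr).
\]
If $\chi(M;x) \bmod p$ were squarefree, every $r_i$ would equal $1$ and the index set of the direct sum would be empty, forcing $V = 0$ and contradicting the nonzeroness established above. Hence $\chi(M;x) \bmod p$ has a repeated irreducible factor, and Lemma \ref{mfbasic} yields $p \mid \Delta_M$.

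The only conceptual subtlety is recognizing that Lemmas \ref{ainv}--\ref{pd}, although phrased for adjacency matrices of signed graphs, depend only on the symmetry of the integer matrix in question and therefore apply verbatim to the abstract $M$ of Lemma \ref{dd}. Once that is granted, the proof is a brief deduction from the primary decomposition combined with the classical resultant-discriminant identity encoded in Lemma \ref{mfbasic}; no new machinery is required.
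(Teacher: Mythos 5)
Your argument is correct. Note first that the paper itself offers no proof of Lemma \ref{dd}: it is quoted from Wang and Yu \cite{wangyu2016}, so there is no internal proof to compare against. What you have written is a clean, self-contained derivation that reassembles exactly the machinery the paper deploys elsewhere (Lemma \ref{ainv} for the structure of $\col_p(\hat{Q})$, Lemma \ref{pd} for the primary decomposition, Lemma \ref{mfbasic} for the passage from a repeated factor to $p\mid\Delta_M$), and your verifications of nonzeroness, total isotropy, and $M$-invariance of $\col_p(\hat{Q})$ are all sound; in particular you correctly observe that none of this uses the signed-graph setting, only that $M$ and $Q^\T MQ$ are symmetric integer matrices and $Q$ is rational orthogonal. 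One point worth making explicit: Lemma \ref{pd} as printed hypothesizes only that $U$ is $M$-invariant, but parts (\rmnum{1}) and (\rmnum{3}) are false in that generality (any eigenvector for a simple eigenvalue spans an invariant subspace meeting $\ker\phi_i(M)$); the lemma requires $U$ totally isotropic and $M$ symmetric, which is how the paper uses it in Proposition \ref{dfphi}. Since you establish total isotropy before invoking the lemma, your application is legitimate, but the justification for the final step should cite that hypothesis rather than invariance alone. (Also, minor wording: you announce a proof of "the contrapositive" but then prove the implication $p\mid\ell(Q)\Rightarrow p\mid\Delta_M$ directly, which is what is wanted.)
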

    \begin{lemma}[\cite{wangyu2016}]\label{d}
    Let  $M\in \S_n(\mathbb{Z})$ and $p$ be any odd prime. If $p\mid\Delta _{M}$ but $p^2\nmid\Delta _{M}$, then there exists an integer $\lambda_0$ and a polynomial $\varphi(x)$ with integer coefficients such that  $\chi(M;x)\equiv(x-\lambda_0)^2\varphi(x)\pmod{p}$, where $\varphi(x)$ is squarefree over $\mathbb{Z}_p$ and $ \varphi(\lambda_0)\not\equiv 0\pmod{p}$.
    \end{lemma}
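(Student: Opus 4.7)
The plan is to reduce the statement to a factor-by-factor $p$-adic analysis of the characteristic polynomial. First I would invoke Lemma~\ref{mfbasic}: since $p\mid\Delta_M$, the reduction $\chi(M;x)\bmod p$ has a multiple factor, so we may write
\begin{equation*}
\chi(M;x)\equiv\prod_{i=1}^k\phi_i(x)^{r_i}\pmod p
\end{equation*}
with $\phi_1,\ldots,\phi_k\in\mathbb{F}_p[x]$ distinct monic irreducibles and at least one $r_i\ge 2$. Because the blocks $\phi_i^{r_i}$ are pairwise coprime over $\mathbb{F}_p$, Hensel's lemma lifts this to a factorization $\chi(M;x)=\prod_i G_i(x)$ in $\mathbb{Z}_p[x]$ with each $G_i$ monic and $G_i\equiv\phi_i^{r_i}\pmod p$.

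Next I would invoke the multiplicativity of the discriminant,
\begin{equation*}
\Delta_M=\prod_i\Delta(G_i)\cdot\prod_{i<j}\Res(G_i,G_j)^2,
\end{equation*}
combined with the fact that pairwise coprimality of the $\phi_i$ mod $p$ forces each cross resultant $\Res(G_i,G_j)$ to be a $p$-adic unit. This reduces the question to computing $v_p(\Delta_M)=\sum_i v_p(\Delta(G_i))$, where $v_p$ denotes the usual $p$-adic valuation. When $r_i=1$, the reduction $\phi_i$ is separable over the perfect field $\mathbb{F}_p$, which forces $G_i$ to be irreducible and unramified over $\mathbb{Q}_p$, so $v_p(\Delta(G_i))=0$ and such blocks do not contribute.

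The heart of the proof is the following bound, which handles the blocks with $r_i\ge 2$: setting $d_i:=\deg\phi_i$, one has
\begin{equation*}
v_p(\Delta(G_i))\ge d_i(r_i-1),
\end{equation*}
with equality attainable (for $p$ odd) only in the tamely ramified case $r_i=2$, $d_i=1$. I would establish this by grouping the $r_id_i$ roots of $G_i$ in $\overline{\mathbb{Q}}_p$ into $d_i$ clusters of size $r_i$, one cluster per root of $\phi_i$ in $\overline{\mathbb{F}}_p$, and applying a Newton polygon (or tame-ramification) argument to lower bound the pairwise valuations $v_p(\alpha-\beta)$ within each cluster. The odd-prime hypothesis enters precisely at this point: wild ramification at $p=2$ forces any ramified quadratic block to contribute $v_2(\Delta(G_i))\ge 2$, so the minimum value $1$ genuinely requires $p$ odd.

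Putting the pieces together, $v_p(\Delta_M)=1$ forces exactly one index $i_0$ with $r_{i_0}=2$ and $\deg\phi_{i_0}=1$ contributing $v_p(\Delta(G_{i_0}))=1$, and all other $r_j=1$ contributing $0$. Writing $\phi_{i_0}(x)=x-\lambda_0$ and $\varphi(x):=\prod_{j\neq i_0}\phi_j(x)$ yields the required factorization $\chi(M;x)\equiv(x-\lambda_0)^2\varphi(x)\pmod p$; here $\varphi$ is squarefree because the remaining $\phi_j$'s are distinct irreducibles, and $\varphi(\lambda_0)\not\equiv 0\pmod p$ because none of them equals $x-\lambda_0$. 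I expect the valuation inequality $v_p(\Delta(G_i))\ge d_i(r_i-1)$ to be the main technical obstacle; the Hensel lifting, the multiplicativity identity, and the treatment of the separable blocks are all routine.
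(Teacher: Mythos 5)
The paper does not prove this lemma: it is quoted from Wang and Yu \cite{wangyu2016}, so there is no in-paper argument to compare against, and your proposal must be judged on its own. It is correct in outline, but it reaches the conclusion by a considerably heavier route than necessary. The entire statement already follows from the paper's own Lemmas \ref{degker} and \ref{snf} (which is, in essence, the Wang--Yu argument): writing $f=\chi(M;x)$ and $S=S(f,f')$, Lemma \ref{snf}(\rmnum{1}) gives $p^{(2n-1)-\rank_p S}\mid\det S=\pm\Delta_M$, while Lemma \ref{degker} identifies $(2n-1)-\rank_p S$ with $\deg\gcd(\bar f,\bar f')$; if $\bar f=\prod_i\phi_i^{r_i}$ over $\mathbb{F}_p$, the derivative computation (as in the proof of Proposition \ref{not2}) shows $\deg\gcd(\bar f,\bar f')\ge\sum_i(r_i-1)\deg\phi_i$, so the hypothesis that $p^2\nmid\Delta_M$ forces exactly one $r_i\ge 2$, with $r_i=2$ and $\deg\phi_i=1$ --- which is precisely the desired factorization. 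Your Hensel lifting, the identity $\Delta(fg)=\Delta(f)\Delta(g)\Res(f,g)^2$, the observation that the cross-resultants are $p$-adic units, and the treatment of the simple blocks are all sound; and your key inequality $v_p(\Delta(G_i))\ge d_i(r_i-1)$ is exactly the local, block-by-block version of the Sylvester-matrix inequality above. What the $p$-adic route buys is a conceptual interpretation via ramification; what it costs is that the one step you yourself identify as the main obstacle (the cluster/Newton-polygon lower bound) needs genuine care to make rigorous, whereas the elementary route gets it for free from the Smith normal form.

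Two minor inaccuracies, neither of which breaks the argument. First, the parenthetical claim that equality in $v_p(\Delta(G_i))\ge d_i(r_i-1)$ is attainable only when $r_i=2$, $d_i=1$ is false as stated (e.g.\ $x^3-p$ has $v_p(\Delta)=2=d(r-1)$ for $p>3$); fortunately you never use it --- you only need that every block with $r_i\ge2$ contributes at least $d_i(r_i-1)\ge1$, which pins down $d_{i_0}=1$, $r_{i_0}=2$ once the total valuation is $1$. Second, the oddness of $p$ does not actually enter your implication: the lower bound holds for all primes, and for $p=2$ the hypothesis $v_2(\Delta_M)=1$ is simply never satisfied (this is Proposition \ref{not2}), so the lemma's restriction to odd $p$ is a matter of non-vacuousness rather than a needed ingredient. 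Note also that the symmetry of $M$ plays no role; the statement is really one about monic integer polynomials with $v_p(\Delta)=1$.
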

    \begin{lemma}[\cite{wangyu2016}]\label{d2}
	Let  $M\in \S_n(\mathbb{Z})$  and $p$ be any odd prime. Suppose that $\chi(M;x)\equiv(x-\lambda_0)^2\varphi(x)\pmod{p}$ for some $\lambda_0\in\mathbb{Z}$ and $\varphi(x)\in \mathbb{Z}[x]$, where  $\varphi(x)$ is squarefree over $\mathbb{F}_p$ and $\varphi(\lambda_0)\not\equiv 0\pmod{p}$. Then the equation
	\begin{eqnarray*}
		\chi(M;x)u(x)\equiv \chi'(M;x) v(x)\pmod{p^2}
	\end{eqnarray*}
	has a solution $(u(x),v(x))\in (\mathbb{Z}[x])^2$ with:\\
	\noindent(\rmnum{1}) $u(x),v(x)\not\equiv0\pmod{p}$;\\
	\noindent(\rmnum{2}) $\deg(u(x)) <n-1= \deg(\chi'(M;x))$;\\
	\noindent(\rmnum{3}) $\deg(v(x)) < n=\deg(\chi(M;x))$,\\
	if and only if $p^2\mid\det(M-\lambda_0I)$.
    \end{lemma}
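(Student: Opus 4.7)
The plan is to attack the congruence $\chi(M;x)\,u(x) \equiv \chi'(M;x)\,v(x) \pmod{p^2}$ via a two-step lifting procedure: first classify all mod-$p$ solutions, then compute the obstruction to lifting a chosen mod-$p$ solution to a mod-$p^2$ one, and finally translate that obstruction into a divisibility condition on $\chi(\lambda_0)$.

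First I would exploit the factorization to write $\chi \equiv (x-\lambda_0)^2\varphi$ and, by differentiating, $\chi' \equiv (x-\lambda_0)\psi \pmod{p}$ where $\psi := 2\varphi + (x-\lambda_0)\varphi'$. A short check shows $\psi(\lambda_0) \equiv 2\varphi(\lambda_0) \not\equiv 0 \pmod{p}$ (here the oddness of $p$ is essential), and $\gcd(\psi,\varphi) = 1$ in $\mathbb{F}_p[x]$ because $\varphi$ is squarefree, so $\psi$ is coprime to $(x-\lambda_0)\varphi$. Modulo $p$ the target equation collapses to $(x-\lambda_0)\varphi\cdot u \equiv \psi\,v$, and the coprimality forces $(x-\lambda_0)\varphi \mid v$. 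Combined with $\deg v < n = \deg((x-\lambda_0)\varphi)+1$, this pins $v$ down to a scalar multiple of $(x-\lambda_0)\varphi$ and correspondingly $u$ to the same scalar multiple of $\psi$. The nondegeneracy requirement $u,v\not\equiv 0 \pmod{p}$ means the scalar is nonzero, and after rescaling we may take $u \equiv \psi$, $v \equiv (x-\lambda_0)\varphi \pmod{p}$.

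Second I would set $u = \psi + p u_1$, $v = (x-\lambda_0)\varphi + p v_1$ and reduce modulo $p^2$. Writing $\chi = (x-\lambda_0)^2\varphi + pg$ in $\mathbb{Z}[x]$, a direct expansion yields $\chi\psi - \chi'(x-\lambda_0)\varphi = pR$ with $R = g\psi - g'(x-\lambda_0)\varphi \in \mathbb{Z}[x]$, so the mod-$p^2$ equation reduces to the mod-$p$ condition $R \equiv \chi' v_1 - \chi u_1 \pmod{p}$. The key structural observation is that the $\mathbb{F}_p$-linear map $(u_1,v_1)\mapsto \chi' v_1 - \chi u_1$ (with $\deg u_1 < n-1$, $\deg v_1 < n$) has image equal to exactly the set of polynomials of degree $\le 2n-2$ divisible by $(x-\lambda_0)$: the inclusion is immediate since $(x-\lambda_0)\mid\gcd(\chi,\chi') \pmod{p}$, and equality follows from a dimension count using that the kernel is the $1$-dimensional space identified in Step 1. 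Hence the lift exists if and only if $(x-\lambda_0)\mid R \pmod{p}$, that is, $R(\lambda_0) \equiv 0 \pmod{p}$.

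Finally, $R(\lambda_0) = g(\lambda_0)\psi(\lambda_0)$ with $g(\lambda_0) = \chi(\lambda_0)/p$ and $\psi(\lambda_0)\not\equiv 0 \pmod{p}$, so $R(\lambda_0)\equiv 0 \pmod{p}$ is equivalent to $p^2 \mid \chi(\lambda_0)$, i.e.\ $p^2 \mid \det(M-\lambda_0 I)$. I expect the main obstacle to be the degree-bound bookkeeping in Step 2: one must verify rigorously that the Sylvester-style map really does surject onto the subspace of degree-$\le 2n{-}2$ polynomials vanishing at $\lambda_0$, which is cleanest via the dimension count but still demands a careful check that the $1$-dimensional kernel description of Step 1 is exhaustive at the level of the full Sylvester space and that the degree inequalities are respected throughout.
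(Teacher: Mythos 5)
This lemma is imported by the paper from Wang and Yu \cite{wangyu2016} and is stated here without proof, so there is no in-paper argument to compare yours against; I can only assess your proposal on its own terms. It is correct and self-contained. Your mod-$p$ classification is right: writing $\chi'\equiv(x-\lambda_0)\psi$ with $\psi=2\varphi+(x-\lambda_0)\varphi'$, the facts $\psi(\lambda_0)\equiv 2\varphi(\lambda_0)\not\equiv 0$ (using $p$ odd) and $\gcd(\psi,(x-\lambda_0)\varphi)=1$ (using that squarefree polynomials over the perfect field $\mathbb{F}_p$ are coprime to their derivatives) do force every mod-$p$ solution within the degree bounds to be a scalar multiple of $(\psi,(x-\lambda_0)\varphi)$, and the identity $\chi\psi-\chi'(x-\lambda_0)\varphi=p\bigl(g\psi-g'(x-\lambda_0)\varphi\bigr)$ checks out exactly. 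The dimension count in Step 2 also holds: the kernel of $(u_1,v_1)\mapsto \chi'v_1-\chi u_1$ is the same one-dimensional space, so the image is the full $(2n-2)$-dimensional space of polynomials of degree at most $2n-2$ vanishing at $\lambda_0$, and since $R(\lambda_0)=g(\lambda_0)\psi(\lambda_0)$ with $g(\lambda_0)=\chi(\lambda_0)/p$, the obstruction is precisely $p^2\mid\chi(\lambda_0)=\pm\det(M-\lambda_0 I)$. Two small points you should make explicit in a final write-up: in the forward direction the rescaling must be by an inverse of the scalar modulo $p^2$ (not merely modulo $p$) so that the congruence is preserved, and one should note that the degree bounds on $u_1,v_1$ inherited from $u,v$ and from the chosen integer lifts are exactly those required for the dimension count. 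Neither is a gap.
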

       \begin{remark}\label{nid}\normalfont{
    		Under the assumption of this lemma, the truth of $p^2\mid \det (M-\lambda_0 I)$ does not depend on the choice of $\lambda_0$ in its residue class, i.e., for any $\lambda_1$ and $\lambda_2$ such that $\lambda_1\equiv \lambda_2\equiv \lambda_0\pmod{p}$, 
    		$p^2\mid \det (M-\lambda_1 I)$ if and only if  	$p^2\mid \det (M-\lambda_2 I)$. This can be easily seen from the conclusion of Lemma \ref{d2} since the existence of a solution $(u,v)$ satisfying the given conditions clearly does not depend on the choice of $\lambda_0$ in its residue class.}
    \end{remark}    
    
    Let $M$ be a nonsingular $n\times n$ integer matrix. It is well-known that there exist two unimodular matrices $U$ and $V$ such that $UMV$ is a diagonal  matrix $S=\diag{(d_1,d_2,\ldots,d_n)}$, where the diagonal entries $d_1,\ldots,d_n$ are positive integers with $d_i\mid d_{i+1}$ for $i=1,2\ldots,n$. The matrix $S$ is unique and is called the Smith normal form of $M$; the diagonal entries are called the invariant factor of $M$.  We summarize some  basic properties of a matrix using its invariant factors.
 \begin{lemma}\label{snf}
	Let $p$ be any prime. For an integral matrix $M$ with invariant factors $d_1,d_2,\ldots,d_n$. We have\\
	\noindent(\rmnum{1}) $p^{n-\rank_p(M)}\mid \det M$;\\
	\noindent(\rmnum{2}) $\det(M)=\pm d_1d_2\ldots d_n$;\\
	\noindent(\rmnum{3}) $\rank_pM=\max\{i:p\nmid d_i\}$;\\
	\noindent(\rmnum{4}) $Mx\equiv0$ $\pmod {p^2}$ has a solution $x\not \equiv  0$ $\pmod {p}$ if and only if $p^2\mid d_n$.	
\end{lemma}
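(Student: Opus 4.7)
The plan is to derive all four assertions from the Smith normal form decomposition $UMV=S:=\diag(d_1,\ldots,d_n)$ with $U,V\in\textup{GL}_n(\mathbb{Z})$. Because $U$ and $V$ are unimodular, they satisfy $\det U,\det V\in\{\pm 1\}$; they remain invertible modulo every prime power; and left/right multiplication by them preserves the determinant up to sign, the rank over $\mathbb{F}_p$, and congruences modulo $p$ and $p^2$. Each of the four parts then reduces to an essentially diagonal computation on $S$.

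With this setup, (\rmnum{2}) is immediate: $\det M=\pm\det S=\pm d_1\cdots d_n$. For (\rmnum{3}), note that $\rank_p M=\rank_p S$ equals the number of diagonal entries of $S$ not divisible by $p$, and the divisibility chain $d_i\mid d_{i+1}$ forces the set of indices $i$ with $p\nmid d_i$ to be an initial segment $\{1,\ldots,k\}$ where $k=\max\{i:p\nmid d_i\}$, so $\rank_p M=k$. Statement (\rmnum{1}) then follows by combining (\rmnum{2}) and (\rmnum{3}): precisely $n-\rank_p M$ of the invariant factors are divisible by $p$, so $p^{n-\rank_p M}$ divides $d_1\cdots d_n=\pm\det M$.

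For (\rmnum{4}), change variables by $y=V^{-1}x$, so that $x\not\equiv 0\pmod p$ iff $y\not\equiv 0\pmod p$ (since $V$ is unimodular), and $Mx\equiv 0\pmod{p^2}$ iff $Sy\equiv 0\pmod{p^2}$, i.e.\ $d_iy_i\equiv 0\pmod{p^2}$ for every $i$. A vector $y$ satisfying these conditions with some $y_j\not\equiv 0\pmod p$ exists iff $p^2\mid d_j$ for some $j$; by the divisibility chain, this is equivalent to $p^2\mid d_n$, and the converse direction is witnessed by $y=e_n$.

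All four parts are essentially mechanical consequences of the Smith normal form. If anything demands care, it is (\rmnum{4}), where one must handle the correspondence $x\leftrightarrow y=V^{-1}x$ mod $p$ correctly and exploit the divisibility chain $d_1\mid d_2\mid\cdots\mid d_n$ to collapse the condition "some $d_j$ divisible by $p^2$" to the single largest invariant factor $d_n$.
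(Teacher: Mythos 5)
Your proof is correct. The paper itself states Lemma \ref{snf} without proof, presenting it as a summary of standard facts about the Smith normal form, and your derivation---reducing each assertion to a diagonal computation on $S=UMV$ via the unimodularity of $U$ and $V$, and using the divisibility chain $d_1\mid d_2\mid\cdots\mid d_n$ to handle (\rmnum{3}) and (\rmnum{4})---is exactly the standard argument the authors are implicitly relying on. In particular your treatment of (\rmnum{4}) correctly uses that $y_j\not\equiv 0\pmod p$ makes $y_j$ a unit modulo $p^2$, so $d_jy_j\equiv 0\pmod{p^2}$ forces $p^2\mid d_j$ and hence $p^2\mid d_n$.
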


\begin{lemma}\label{degker}
	Let $p$ be a prime and $f(x)=x^n+a_{n-1}x^{n-1}+\cdots+a_1x+a_0$ be a monic polynomial over $\mathbb{F}_p$. Then  $\deg \gcd(f,f')=\dim \ker S(f,f')=(2n-1)-\rank_pS(f,f')$.
\end{lemma}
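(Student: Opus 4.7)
The equality $\dim\ker S(f,f') = (2n-1)-\rank_p S(f,f')$ is just the rank--nullity theorem applied to the $(2n-1)\times(2n-1)$ matrix $S(f,f')$, so the content of the lemma lies entirely in the first equality $\dim\ker S(f,f') = \deg\gcd(f,f')$.

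My plan is to recognize $S(f,f')$ as encoding the classical Sylvester map
\[
\Phi\colon P_{n-2}\times P_{n-1}\longrightarrow P_{2n-2},\qquad \Phi(u,v)=uf+vf',
\]
where $P_k\subseteq\mathbb{F}_p[x]$ denotes the space of polynomials of degree at most $k$. A direct inspection of the block structure shows that the rows of the top (resp.\ bottom) block of $S(f,f')$ are the coefficient vectors of $x^{n-2}f,x^{n-3}f,\ldots,f$ (resp.\ $x^{n-1}f',x^{n-2}f',\ldots,f'$), expressed in the monomial basis $\{x^{2n-2},x^{2n-3},\ldots,1\}$ of $P_{2n-2}$. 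Consequently, the row space of $S(f,f')$ coincides with the image of $\Phi$, so $\rank_p S(f,f')=\rank\Phi$, and by rank--nullity on both sides $\dim\ker S(f,f') = \dim\ker\Phi$.

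The remaining task is to show $\dim\ker\Phi = \deg\gcd(f,f')$. Set $d=\gcd(f,f')$ and $\delta=\deg d$. If $f'\equiv 0\pmod p$, then $d=f$ and $\ker\Phi=\{0\}\times P_{n-1}$ has dimension $n=\delta$, as required. Otherwise, write $f=d\tilde f$ and $f'=d\tilde f'$ with $\gcd(\tilde f,\tilde f')=1$; then $\Phi(u,v)=0$ reduces to $u\tilde f=-v\tilde f'$, and coprimality forces $u=\tilde f' w$ and $v=-\tilde f w$ for a unique $w\in\mathbb{F}_p[x]$. A short bookkeeping check shows that the constraints $\deg u<n-1$ and $\deg v<n$ both collapse to $\deg w<\delta$ (the constraint on $u$ is automatically weaker whenever $\deg f'<n-1$), so $w$ ranges freely in a $\delta$-dimensional space and $\dim\ker\Phi=\delta$.

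The only mildly subtle point is that $\deg f'$ can be strictly less than $n-1$ in $\mathbb{F}_p[x]$ when $p\mid n$, which is why the textbook identity $\rank S(f,f')=2n-1-\deg\gcd(f,f')$ cannot be quoted off the shelf. The parametrization via $w$ above handles the degenerate and non-degenerate cases uniformly, and that is the step requiring the most care.
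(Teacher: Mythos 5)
Your proposal is correct and follows essentially the same route as the paper: both identify $\ker S(f,f')$ with the kernel of the Sylvester map $(u,v)\mapsto uf\pm vf'$ on the appropriate spaces of bounded-degree polynomials, and both parametrize that kernel as $(\tfrac{f'}{d}w,\,\tfrac{f}{d}w)$ using coprimality of the quotients, with the degree bounds forcing $\deg w<\deg d$. Your explicit treatment of the degenerate cases ($f'\equiv 0$ and $\deg f'<n-1$ when $p\mid n$) is a welcome extra care that the paper's proof leaves implicit, but it does not change the argument.
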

\begin{proof}
	Let $$u(x)=u_0x^{n-2}+u_1x^{n-3}+\cdots+u_{0},\text{~and~} v(x)=v_{n-1}x^{n-1}+v_{n-2}x^{n-2}+\cdots+v_0.$$
	We consider the equation 
	\begin{equation}\label{euv}
		f(x)u(x)-f'(x)v(x)=0 \text{~with variables~} (u,v).
	\end{equation}  Let $g(x)=\frac{f(x)}{d(x)}$ and $h(x)=\frac{f'(x)}{d(x)}$ where $d(x)=\gcd(f(x),f'(x))$. 
	Then the equation	$f(x)u(x)-f'(x)v(x)=0$ reduces to 
	\begin{equation}\label{gh}
		g(x)u(x)=h(x)v(x).
	\end{equation}
	Noting that $g(x)$ and $h(x)$ are coprime, the solution $(u,v)$ of Eq. \eqref{gh} has the form
	$(u,v)=(h(x)r(x),g(x)r(x))$ for some $r(x)\in \mathbb{F}_p[x]$. Let $k=\deg d(x)$. To satisfy the restrictions of degrees of $u(x)$ and $v(x)$, we need (and only need) $\deg r(x)\le k-1$. This means that the solution space of Eq.~\eqref{gh} (or equivalently Eq.~\eqref{euv}) has dimension $k$. 
	
	Write $\eta=(u_{n-2},u_{n-3},\ldots,u_0,v_{n-1},v_{n-2},\ldots,v_0)$ and $S=S(f,f')$.	Then Eq.~\eqref{euv} is equivalent to $
	S^\T\eta =0$. This means that the solution subspace of Eq.~\eqref{euv} is isomorphic to $\ker S(f,f')$. It follows that $\dim \ker S(f,f')=k=\deg\gcd(f,f')$. This completes the proof.
\end{proof}

\begin{proposition}\label{not2}
	Let $f(x)$ be a monic polynomial with integer coefficients. Then $\Delta(f)\not\equiv 2\pmod{4}$.
\end{proposition}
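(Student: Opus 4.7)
The plan is to prove this via a classical Stickelberger-type identity that expresses $\Delta(f)$ as a difference of two squares after extraction of a factor of $4$. The statement is equivalent to saying that $\Delta(f) \bmod 4 \in \{0,1\}$, so I will in fact establish the stronger claim $\Delta(f) \equiv 0$ or $1 \pmod{4}$.

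First I would write $\Delta(f) = \det(V)^2$, where $V$ is the Vandermonde matrix $V = (\alpha_i^{j-1})_{1\le i,j\le n}$ with $\alpha_1,\ldots,\alpha_n$ the roots of $f$ in $\mathbb{C}$. Expanding the determinant via the Leibniz formula and grouping terms by the sign of the permutation, I set
\begin{equation*}
P=\sum_{\sigma\in A_n}\prod_{i=1}^{n}\alpha_i^{\sigma(i)-1},\qquad N=\sum_{\sigma\in S_n\setminus A_n}\prod_{i=1}^{n}\alpha_i^{\sigma(i)-1},
\end{equation*}
so that $\det(V)=P-N$ and hence $\Delta(f)=(P-N)^2=(P+N)^2-4PN$.

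The key point is that $P+N$ and $PN$ are symmetric polynomials in $\alpha_1,\ldots,\alpha_n$ with integer coefficients. For $P$ and $N$ individually, any even permutation of the indices permutes $A_n$ and $S_n\setminus A_n$ each into itself, while any odd permutation swaps these two cosets. Consequently $P$ and $N$ are each $A_n$-invariant, and under any transposition of indices they are interchanged. Therefore $P+N$ and $PN$ are $S_n$-invariant. Since $f$ is monic with integer coefficients, the elementary symmetric polynomials in $\alpha_i$ are integers, so by the fundamental theorem of symmetric polynomials $P+N\in\mathbb{Z}$ and $PN\in\mathbb{Z}$. Reducing modulo $4$ gives $\Delta(f)\equiv (P+N)^2\pmod{4}$, and since every integer square is congruent to $0$ or $1$ modulo $4$, the conclusion follows.

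The argument is essentially one paragraph of computation and poses no real obstacle; the only subtle point to state carefully is the symmetry claim for $P+N$ and $PN$, which hinges on distinguishing the action of even versus odd permutations on the two cosets of $A_n$ in $S_n$. Nothing in this argument uses anything specific to signed graphs, so the lemma is purely arithmetic and serves later (together with the squarefree hypothesis on $2^{-n/2}\sqrt{\Delta_\Sigma}$) to force the parity of $\sqrt{\Delta_\Sigma}/2^{n/2}$ in the proof of Theorem~\ref{main}.
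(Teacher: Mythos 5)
Your proof is correct, but it is a genuinely different argument from the one in the paper: what you give is the classical Stickelberger decomposition $\Delta(f)=\det(V)^2=(P+N)^2-4PN$ with $P$ and $N$ the even- and odd-permutation halves of the Vandermonde determinant, and the symmetry argument for $P+N$ and $PN$ (even permutations fix each coset of $A_n$, odd ones swap them) is exactly the subtle point and you state it correctly. Note that you actually prove the stronger statement $\Delta(f)\equiv 0,1\pmod 4$, i.e.\ you also exclude the residue $3$; the paper only needs, and only proves, the exclusion of $2$. The paper's route is entirely different in flavor: it works modulo $2$, showing that in the factorization $f=\prod\phi_i^{r_i}$ over $\mathbb{F}_2$ the multiplicity of each $\phi_i$ in $\gcd(f,f')$ is forced to be even (because $(\phi^{r})'=0$ over $\mathbb{F}_2$ when $r$ is even), so $\deg\gcd(f,f')$ is even; it then combines this with Lemma~\ref{degker} and the Smith normal form of the Sylvester matrix, since $\det S\equiv 2\pmod 4$ would force exactly one even invariant factor and hence $\deg\gcd(f,f')=1$, a contradiction. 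What your approach buys is self-containedness and a sharper conclusion from standard symmetric-function theory; what the paper's approach buys is that it reuses the Sylvester-matrix and Smith-normal-form machinery (Lemmas~\ref{degker} and~\ref{snf}) that is already needed in the proof of Proposition~\ref{p2}, so no new tools are introduced. Either proof suffices for the application in Theorem~\ref{main}, where the point is only that a squarefree $\Delta_{BB^\T}$ must be odd.
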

\begin{proof}
	Let $f(x)=\phi^{r_1}_1(x)\phi^{r_2}_2(x)\cdots\phi^{r_k}_k(x)$ be the standard factorization of $f(x)$ over $\mathbb{F}_2$. We claim that, over $\mathbb{F}_2$, 
	\begin{equation*}
		\gcd(f,f')=\phi^{s_1}_1(x)\phi^{s_2}_2(x)\cdots\phi^{s_k}_k(x),	\end{equation*}
where 
\begin{equation*}
	s_i=\begin{cases} r_i&r_i \text{~even;}\\
	r_i-1 &r_i \text{~odd}.
\end{cases}
\end{equation*}	
Indeed, if some $r_i$, say $r_1$ is odd, then we have $r_1\equiv 1\pmod 2$ and hence
$$f'(x)=\phi_1^{r_1-1}(x)\left(\prod_{j=2}^{k}\phi_j^{r_j}(x)\right)+\phi_1^{r_1}(x)\left(\prod_{j=2}^{k}\phi_j^{r_j}(x)\right)' \text{~over~} \mathbb{F}_2,$$
which implies that the multiplicity of the factor $\phi_1(x)$ in $f'(x)$ is exactly $r_1-1$. Thus, in this case, the multiplicity of $\phi_1(x)$ in $\gcd(f,f')$ is $r_1-1$.   However, if $r_1$ is even, i.e., $r_1\equiv 0\pmod{2}$, then over $\mathbb{F}_2$, we have $(\phi_1^{r_1}(x))'=0$ and hence $$f'(x)=\phi_1^{r_1}(x)\left(\prod_{j=2}^{k}\phi_j^{r_j}(x)\right)' \text{~over~} \mathbb{F}_2,$$
which implies that the multiplicity of $\phi_1(x)$ in $\gcd(f,f')$ is $r_1$. This proves the claim. It follows that the degree of $\gcd(f,f')$ must be even as each multiplicity $s_i$ is even. 

Let $S\in\mathbb{Z}^{(2n-1)\times (2n-1)}$ be the Sylvester matrix of $f$ and $f'$. Suppose to the contrary that $\Delta(f)\equiv 2\pmod{4}$, or equivalently, $\det S\equiv 2\pmod{4}$. Then, by Lemma  (\rmnum{2}), we see that $S$ has exactly one invariant factor that is even, that is, $\rank_2 S=2n-2$ . But this implies $\deg \gcd(f,f')=1$ by Lemma \ref{degker}, contradicting the established fact that the degree of $ \gcd(f,f')$ is  always even. This completes the proof of Proposition \ref{not2}.
\end{proof}

 %   \begin{lemma}[\cite{wangyu2016}, \cite{wang2013ElJC}]\label{snf}
 %   	Let $p$ be any prime. For an integral matrix $M$ with invariant factors $d_1,d_2,\ldots,d_n$, the equation $Mx\equiv0$ $\pmod{p^2}$ has a solution $x\not \equiv  0$ $\pmod {p}$ if and only if $p^2\mid d_n$.	
  %  \end{lemma}

	\section{Proof of Theorem \ref{main}}
For convenience, we define
\begin{equation*}
	\delta=\delta(n)=\lceil n/2 \rceil -\lfloor n/2 \rfloor=\begin{cases}
		0& \text{$n$ even;}\\
		1& \text{$n$ odd.}
	\end{cases}
\end{equation*}
Let $\Sigma$ be an $n$-vertex controllable or almost controllable signed graph such that 
$c_\delta=\pm 1$, where $c_\delta$ is the coefficient of the term $x^\delta$ in $\chi(\Sigma;x)$. As $\Sigma$ is bipartite, we may write the adjacency matrix $A=A(\Sigma)$ in the form
\begin{equation*}
 A=\begin{pmatrix} 
	0   & B \\
	B^\T  & 0 
\end{pmatrix},
\end{equation*} 
	 where $B$ is an $s\times (n-s)$ matrix with $s\le \lfloor n/2\rfloor$. We claim that the equality must hold. Indeed, if $s<\lfloor n/2\rfloor$ then we have 
	 $\rank(A)=2\rank(B)\le 2(\lfloor n/2\rfloor -1)\le n-2$, which implies that $0$ is a multiple eigenvalue of $A$. This contradicts the requirement that $c_\delta=\pm 1$. 
	 
	 \begin{lemma}\label{chiab}
	 	$\chi(A;x)=x^\delta \chi(BB^\T;x^2)=x^{-\delta}\chi(B^\T B;x^2)$.
	 \end{lemma}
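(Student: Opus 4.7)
The identity is a routine block–matrix calculation, so there is no real obstacle; I would simply record it carefully. The plan is to exploit the bipartite block form
\[
xI-A=\begin{pmatrix} xI_s & -B \\ -B^\T & xI_{n-s}\end{pmatrix}
\]
and apply the Schur complement formula on either diagonal block. Working over the field of rational functions $\mathbb{Q}(x)$ (so that the scalar blocks $xI_s$ and $xI_{n-s}$ are invertible), the standard identity $\det\!\begin{pmatrix}P & Q\\ R & S\end{pmatrix}=\det(P)\det(S-RP^{-1}Q)$ yields two evaluations of $\chi(A;x)$.

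Using $P=xI_s$ first, I would compute
\[
S-RP^{-1}Q=xI_{n-s}-(-B^\T)(x^{-1}I_s)(-B)=xI_{n-s}-x^{-1}B^\T B,
\]
so that
\[
\chi(A;x)=x^{s}\det\!\left(xI_{n-s}-x^{-1}B^\T B\right)=x^{s-(n-s)}\det\!\left(x^2I_{n-s}-B^\T B\right)=x^{2s-n}\chi(B^\T B;x^2).
\]
Recalling $s=\lfloor n/2\rfloor$, so that $2s-n=-\delta$, this gives $\chi(A;x)=x^{-\delta}\chi(B^\T B;x^2)$. Symmetrically, applying Schur complement to the other diagonal block yields
\[
\chi(A;x)=x^{n-s}\det\!\left(xI_s-x^{-1}BB^\T\right)=x^{n-2s}\chi(BB^\T;x^2)=x^{\delta}\chi(BB^\T;x^2).
\]

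Both equalities hold a priori in $\mathbb{Q}(x)$, but since every term is a polynomial in $x$ (after clearing the $x^{\pm\delta}$), the identity in fact holds in $\mathbb{Z}[x]$. The only small care needed is to note that $\chi(B^\T B;x^2)$ and $\chi(BB^\T;x^2)$ differ by a factor of $x^{2\delta}$: this is the standard fact that the $s\times s$ matrix $BB^\T$ and the $(n-s)\times(n-s)$ matrix $B^\T B$ share the same nonzero eigenvalues with the same multiplicities, differing only in the multiplicity of $0$ by $|(n-s)-s|=\delta$. This reconciles the two expressions and completes the proof.
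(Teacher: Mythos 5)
Your proof is correct and follows essentially the same route as the paper: the paper's multiplication by a unit block-triangular matrix is exactly the block elimination underlying your Schur complement formula, and both arguments work over the field of rational functions before observing the result is polynomial. The sign and exponent bookkeeping ($2s-n=-\delta$, $n-2s=\delta$) checks out.
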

	 \begin{proof}
	 	From the identity 
	 	\begin{equation*}
	 		\begin{pmatrix}
	 			xI_{\lfloor n/2\rfloor} &-B\\
	 			-B^\T& xI_{\lceil n/2\rceil}
	 		\end{pmatrix}\begin{pmatrix}I_{\lfloor n/2\rfloor}&0\\\frac{1}{x}B^\T &I_{\lceil n/2\rceil}\end{pmatrix}=\begin{pmatrix}
	 		xI_{\lfloor n/2\rfloor} -\frac{1}{x}BB^\T &-B\\
	 		0& xI_{\lceil n/2\rceil}
	 		\end{pmatrix},
	 	\end{equation*}
	 	
	 	we obtain
	 \begin{equation*}
	 \det	\begin{pmatrix}
	 	xI_{\lfloor n/2\rfloor} &-B\\
	 	-B^\T& xI_{\lceil n/2\rceil}
	 \end{pmatrix}=\det\begin{pmatrix}
	 xI_{\lfloor n/2\rfloor}-\frac{1}{x}BB^\T &-B\\
	 0& xI_{\lceil n/2\rceil}
	 \end{pmatrix}=x^{-\lfloor n/2\rfloor}\det (x^2 I-BB^\T) x^{\lceil n/2\rceil},
	 \end{equation*}
	 i.e., $\chi(A;x)=x^\delta \chi(BB^\T;x^2)$.
	 Similarly, by the identity
	 	\begin{equation*}
	 	\begin{pmatrix}
	 		xI_{\lfloor n/2\rfloor} &-B\\
	 		-B^\T& xI_{\lceil n/2\rceil}
	 	\end{pmatrix}\begin{pmatrix}I_{\lfloor n/2\rfloor}&\frac{1}{x}B\\0 &I_{\lceil n/2\rceil}\end{pmatrix}=\begin{pmatrix}
	 		xI_{\lfloor n/2\rfloor} &0\\
	 		-B^\T& xI_{\lceil n/2\rceil} -\frac{1}{x}B^\T B
	 	\end{pmatrix},
	 	\end{equation*}
	 	we obtain $\chi(A;x)=x^{-\delta} \chi(B^\T B;x^2)$. This completes the proof.	
	 \end{proof}
	 The following basic connection between $\Delta_A$ and $\Delta_{BB^\T}$ was obtained by Ji et al.~\cite{ji2025} for the case that $n$ is even. We include the short proof for completeness.
	 	\begin{lemma}[\cite{ji2025}] \label{abb}
	  $\Delta_{A}=  4^{ \lfloor n/2 \rfloor }\Delta^2_{BB^\T}$ and hence $2^{-\lfloor n/2\rfloor} \sqrt{\Delta_A}=\Delta_{BB^\T}$. 
	 \end{lemma}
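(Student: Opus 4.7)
The plan is to evaluate $\Delta_A$ directly from its eigenvalue definition, using Lemma \ref{chiab} to express the eigenvalues of $A$ in terms of those of $BB^\T$. Write $m=\lfloor n/2\rfloor$, let $\mu_1,\ldots,\mu_m$ be the eigenvalues of $BB^\T$, and set $\alpha_i=\sqrt{\mu_i}$. The factorization $\chi(A;x)=x^\delta\prod_{i=1}^m(x-\alpha_i)(x+\alpha_i)$ coming from Lemma \ref{chiab} shows that the eigenvalues of $A$ are $\pm\alpha_1,\ldots,\pm\alpha_m$ when $n$ is even, with an extra $0$ in the odd case.

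I would first record the single consequence of the hypothesis $c_\delta=\pm 1$ that is needed: reading off the coefficient of $x^\delta$ in the factorization above gives $c_\delta=(-1)^m\prod_i\mu_i$, so $c_\delta=\pm 1$ together with $\det(BB^\T)=\prod_i\mu_i\ge 0$ forces $\det(BB^\T)=1$. This is the only place where the coefficient hypothesis is invoked.

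The core step is then to expand $\Delta_A=\prod_{k<l}(\lambda_k-\lambda_l)^2$ and partition the unordered pairs of eigenvalues into three classes: (i) the $m$ ``twin'' pairs $\{\alpha_i,-\alpha_i\}$; (ii) the $4\binom{m}{2}$ ``cross'' pairs, one element from $\{\pm\alpha_i\}$ and one from $\{\pm\alpha_j\}$ with $i<j$; and, only in the odd case, (iii) the $2m$ pairs $\{0,\pm\alpha_i\}$. A direct calculation then gives contributions $\prod_i(2\alpha_i)^2=4^m\prod_i\mu_i$ for (i), $\prod_{i<j}[(\alpha_i-\alpha_j)(\alpha_i+\alpha_j)]^4=\prod_{i<j}(\mu_i-\mu_j)^4=\Delta_{BB^\T}^2$ for (ii), and $\prod_i\alpha_i^4=\prod_i\mu_i^2$ for (iii). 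Multiplying these and substituting $\prod_i\mu_i=1$ yields $\Delta_A=4^m\Delta_{BB^\T}^2$, which is the first identity. The ``hence'' statement follows on taking nonnegative square roots, since $A$ is real symmetric and $BB^\T$ is positive semidefinite, so $\Delta_A$ and $\Delta_{BB^\T}$ are both nonnegative.

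I do not anticipate a serious obstacle. The main bookkeeping is verifying that the class sizes sum to $\binom{n}{2}$ in each parity of $n$, and observing that the odd case merely adds class (iii), whose contribution is $(\det BB^\T)^2=1$ under the hypothesis; consequently the identity for odd $n$ is a painless extension of the one established in \cite{ji2025} for even $n$.
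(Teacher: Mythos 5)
Your proposal is correct and follows essentially the same route as the paper: both expand $\Delta_A$ as a product over pairs of eigenvalues $\pm\alpha_i$ (plus $0$ when $n$ is odd), group the factors to extract $4^m\det(BB^\T)\Delta_{BB^\T}^2$ (with an extra $(\det BB^\T)^2$ in the odd case), and then use the coefficient hypothesis together with positive semidefiniteness to conclude $\det(BB^\T)=1$. The only cosmetic difference is that you read off $c_\delta=(-1)^m\prod_i\mu_i$ uniformly, whereas the paper handles the odd case by differentiating $\chi(A;x)=x\,\chi(BB^\T;x^2)$ at $x=0$; the substance is identical.
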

	 \begin{proof} 	
	 	 By Lemma \ref{chiab}, we have $\chi(A;x)=x^\delta\chi(BB^\T;x^2)$.  Denote $m=\lfloor n/2\rfloor$ and let the spectrum of $BB^\T$ be $\spec(BB^\T) =\{\lambda_1^2,\ldots,\lambda_m^2\}$. Then we have  
	 	 \begin{equation}\label{saeo}
	 	 	\spec (A)=\begin{cases}
	 	 		\{\lambda_1,\ldots,\lambda_m\}\cup\{-\lambda_1,\ldots,-\lambda_m\}& n \text{~even;}\\
	 	 		\{\lambda_1,\ldots,\lambda_m\}\cup\{-\lambda_1,\ldots,-\lambda_m\}\cup \{0\}&n \text{~odd.}
	 	 	\end{cases}
 	 	 	 \end{equation}
	 First consider the case that $n$ is even. Then, we have
	 	\begin{eqnarray}\label{ec}
	 		\Delta _{A}&=&\prod _{1\leq i< j\leq m}\left( \lambda _{j}-\lambda _{i}\right) ^{2}\prod _{1\leq i< j\leq m}\left( -\lambda _{j}+\lambda _{i}\right) ^{2}\prod _{\substack{1\le i\le m\\1\le  j \le m}}\left( -\lambda _{j}-\lambda _{i}\right) ^{2} \nonumber\\
	 		&=&\left(\prod _{1\leq i< j\leq m}\left( \lambda _{j}-\lambda _{i}\right) ^{2}\right)^2\left(\prod_{1\le i\le m} (2\lambda_i)^2\right)\left(\prod_{1\le i<j\le m}\left( \lambda_{j}+\lambda_i\right) \right) ^{2}   \nonumber        \\
	 		&=&4^{m}\prod_{1\le i\le m}\lambda_i^2\left(\prod_{1\le i<j\le m}\left( \lambda _{j}^{2}-\lambda _{i}^{2}\right) ^{2}\right) ^{2} \nonumber\\
	 		&=&4^{ m }\det (BB^\T) \Delta^2_{BB^\T}.
	 	\end{eqnarray}  
	 	Noting that $\det(B B^\T)=\pm \det A=\pm c_0=\pm 1$ and $\det(B B^\T)=(\det(B))^2\ge 0$, we must have $\det(B B^\T)=1$ and hence Eq.~\eqref{ec} reduces to $\Delta_A= 4^{m}\Delta^2_{BB^\T}$. This proves the lemma for the even case.
	 	
	Now we consider the case that $n$ is odd. Denote $R=4^{ m }\det (BB^\T) \Delta^2_{BB^\T}$, which is the result of $\Delta_A$ for the even case.   From Eq.~\eqref{saeo}, it is not difficult to see that for odd $n$, 
	\begin{equation}\label{oc}
		\Delta_A=R\times \left(\prod_{1\le i\le m}{\lambda_i^2}\right)^2=4^{ m }\left(\det (BB^\T)\right)^3 \Delta^2_{BB^\T}.
	\end{equation}
	 Recall that the coefficient of the linear term in $\chi(A;x)$  is $\pm 1$. Differentiating both sides of the equation $\chi(A;x)=x\cdot\chi(BB^\T;x^2)$ and evaluating at $x=0$ gives $\chi(BB^\T;0)=\pm 1$, i.e., $\det(BB^\T)=\pm 1$. As $BB^\T$ is positive semidefinite,  we must have $\det( BB)^\T \ge 0$ and hence $\det(BB^\T)= 1$. Thus, Eq.~\eqref{oc} also reduces to $\Delta_A= 4^{m}\Delta^2_{BB^\T}$. This completes the proof.
	 \end{proof} 
	 
Let $Q$ be any matrix in $\mathcal{Q}(\Sigma)$. We shall prove Theorem \ref{main} by establishing the following two propositions.

\begin{proposition}\label{p1}
	If $\ell(Q)$ is even then so is $\Delta_{BB^\T}$.
\end{proposition}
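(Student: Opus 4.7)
The plan is a contrapositive argument: I assume $\Delta_{BB^\T}$ is odd and show $\ell := \ell(Q)$ must be odd. So suppose for contradiction that $\ell$ is even. By Proposition~\ref{dfphi} applied at $p=2$, there exists an irreducible $\phi(x)\in\mathbb{F}_2[x]$ which is a multiple factor of $\chi(A;x)\pmod 2$ with $\col_2(\hat Q)\cap \ker \phi(A)\neq 0$. Combining Lemma~\ref{chiab} with the Frobenius identity $\chi(BB^\T;x)^2\equiv \chi(BB^\T;x^2)\pmod 2$ gives $\chi(A;x)\equiv x^\delta \chi(BB^\T;x)^2\pmod 2$. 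Because $\Delta_{BB^\T}$ is odd, $\chi(BB^\T;x)$ is squarefree mod $2$ by Lemma~\ref{mfbasic}; and because $\det(BB^\T)=1$ (as in the proof of Lemma~\ref{abb}), $x$ does not divide $\chi(BB^\T;x)$ mod $2$. Hence the multiple factors of $\chi(A;x)\pmod 2$ are precisely the irreducible factors of $\chi(BB^\T;x)\pmod 2$, each with multiplicity $2$; in particular $\phi\neq x$. Set $d=\deg \phi$.

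Next I would analyze the $\phi$-primary component $P_\phi := \ker \phi(A)^2 = \ker \phi(A^2)$. The block-diagonal form $A^2 = BB^\T \oplus B^\T B$ gives $P_\phi = \ker \phi(BB^\T)\oplus \ker \phi(B^\T B)$, of dimension $2d$. Writing $\phi(x) = \phi_{\mathrm{ev}}(x^2) + x\phi_{\mathrm{od}}(x^2)$, irreducibility of $\phi$ together with $\phi \neq x$ forces both $\phi_{\mathrm{ev}}$ and $\phi_{\mathrm{od}}$ to be nonzero of degree $<d$. A short computation using that $BB^\T$ has minimal polynomial $\phi$ on $\ker \phi(BB^\T)$ shows $\phi(A)$ is nonzero on $P_\phi$, so the minimal polynomial of $A|_{P_\phi}$ is $\phi^2$ and $\dim \ker \phi(A) = d$. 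Consequently $P_\phi$ is a cyclic $\mathbb{F}_2[x]/(\phi^2)$-module whose only $A$-invariant subspaces are $0$, $\ker \phi(A)$, and $P_\phi$. Applying Lemma~\ref{pd} to the nonzero $A$-invariant subspace $\col_2(\hat Q)\cap P_\phi$ forces it to contain $\ker \phi(A)$, so $\ker \phi(A)\subseteq \col_2(\hat Q)$.

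I would then combine total isotropy (Lemma~\ref{ainv}) with the regularity identity $\hat Q^\T e = \ell e \equiv 0 \pmod 2$: the latter gives $\col_2(\hat Q)\subseteq e^\perp$, and by the $A$-invariance of $\col_2(\hat Q)$ we get $A^j e \perp v$ for every $v\in \ker \phi(A)$ and every $j\ge 0$. Using $A=A^\T$, this means $A^j e\in \ker \phi(A)^\perp = \col \phi(A)$ over $\mathbb{F}_2$. Hence $\col W(\Sigma)\subseteq \col \phi(A)$ and $\rank_{\mathbb{F}_2} W(\Sigma)\le n-d$.

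The main obstacle is the last step: turning this $\mathbb{F}_2$-rank bound into a numerical contradiction with the hypotheses. The plan is to lift $\phi$ to $\tilde\phi\in\mathbb{Z}[x]$ and exploit the bipartite identity $\det \tilde\phi(A) = \tilde\phi(0)^\delta \det R(BB^\T)$, where $R(y) = \tilde\phi_{\mathrm{ev}}(y)^2 - y\tilde\phi_{\mathrm{od}}(y)^2$ satisfies $R \equiv \phi \pmod 2$. A careful Smith-normal-form comparison between the integer matrices $\tilde\phi(A)$, $R(BB^\T)$, and $W(\Sigma)$—combined with $\det(BB^\T)=\pm 1$ (from $c_\delta = \pm 1$) and with the (almost) controllability of $\Sigma$ making $e$ a near-cyclic vector for $A$ over $\mathbb{Q}$—should force the $2$-adic valuation of $\det R(BB^\T)$ to exceed $d$. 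This extra factor of $2$ then pulls back, via $R \equiv \phi \pmod 2$, to show that some irreducible factor of $\chi(BB^\T)\pmod 2$ has multiplicity $\ge 2$, contradicting squarefreeness and thus yielding $2\mid \Delta_{BB^\T}$. The delicate arithmetic of this bridging step, requiring separate handling of the cases $n$ even and $n$ odd, is the technical heart of the argument.
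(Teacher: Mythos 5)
Your opening and middle steps are sound and track the paper's strategy: you correctly reduce to the situation where $\chi(A;x)\equiv x^\delta\chi(BB^\T;x)^2\pmod 2$ with $\chi(BB^\T;x)$ squarefree and prime to $x$, identify the multiple factor $\phi\neq x$, and your structural analysis of the primary component $P_\phi$ (cyclic with minimal polynomial $\phi^2$, hence $\ker\phi(A)\subseteq\col_2(\hat Q)$) is correct. But the proof has a genuine gap at exactly the point you flag as ``the technical heart'': the closing step is not carried out, and the route you propose appears to be a dead end. The bound $\rank_{\mathbb{F}_2}W(\Sigma)\le n-d$ is vacuous, because $d=\deg\phi\le\deg\chi(BB^\T;x)=\lfloor n/2\rfloor$ always, while $\rank_{\mathbb{F}_2}W\le\lceil n/2\rceil$ holds unconditionally for every (signed) graph; moreover the controllability hypothesis constrains $\rank W$ over $\mathbb{Q}$, not over $\mathbb{F}_2$, so no contradiction is available from this direction. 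Note also that knowing $\ker\phi(A)$ is totally isotropic in $\mathbb{F}_2^n$ is itself no contradiction, since $\phi$ has multiplicity $2$ in $\chi(A;x)$ and Lemma~\ref{pd}(\rmnum{1}) only bites for \emph{simple} factors.

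The missing idea is the mod-$4$ refinement of isotropy, Lemma~\ref{m4}: integer vectors $q\in\col(\hat Q)$ satisfy $q^\T A^kq\equiv 0\pmod 4$, not merely $\pmod 2$. The paper uses this extra factor of $2$ to descend through the bipartite block structure: writing $q=(u;v)$, the identities $q^\T A^{2k}q\equiv 0$ and $q^\T A^{2k+1}q\equiv 0\pmod 4$ yield $v^\T(B^\T B)^kv\equiv 0\pmod 4$ and $u^\T(BB^\T)^kBv\equiv 0\pmod 2$ (after halving the doubled cross term), which shows that $\ker\phi(BB^\T)$ itself is a totally isotropic $BB^\T$-invariant subspace of $\mathbb{F}_2^{\lfloor n/2\rfloor}$. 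Since $\phi$ is a \emph{simple} factor of $\chi(BB^\T;x)$ there, Lemma~\ref{pd}(\rmnum{1}) forces $\ker\phi(BB^\T)\cap\ker\phi(BB^\T)=0$, the desired contradiction. Your argument never transfers the isotropy data from $\mathbb{F}_2^n$ down to the half-space where $\phi$ is simple, and without that transfer (or some substitute for Lemma~\ref{m4}) the proof cannot close.
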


\begin{proposition}\label{p2}
If $p$ is an odd prime factor of $\ell(Q)$ then $p^2\mid \Delta_{BB^\T}$.
\end{proposition}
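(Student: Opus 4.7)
I would argue by contradiction: suppose $p^2 \nmid \Delta_{BB^\T}$. Since $p \mid \ell(Q)$, Lemma \ref{dd} gives $p \mid \Delta_A$, and combining the identity $\Delta_A = 4^m \Delta_{BB^\T}^2$ from Lemma \ref{abb} (with $m = \lfloor n/2 \rfloor$) with the oddness of $p$ yields $p \mid \Delta_{BB^\T}$. Applying Lemma \ref{d} to $M = BB^\T$ then produces an integer $\mu_0$ with
\[
\chi(BB^\T; x) \equiv (x - \mu_0)^2 \psi(x) \pmod{p},
\]
where $\psi \in \mathbb{Z}[x]$ is squarefree over $\mathbb{F}_p$ and $\psi(\mu_0) \not\equiv 0 \pmod{p}$. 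Since $\det(BB^\T) = 1$ (as noted in the proof of Lemma \ref{abb}), $0$ is not an eigenvalue of $BB^\T$ mod $p$; comparing constant terms forces $\mu_0 \not\equiv 0$ and $\psi(0) \not\equiv 0 \pmod{p}$.

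By Lemma \ref{chiab}, $\chi(A; x) \equiv x^\delta (x^2 - \mu_0)^2 \psi(x^2) \pmod{p}$. Since $\mu_0 \not\equiv 0$, $\psi(0) \not\equiv 0$, and $\delta \le 1$, every irreducible multiple factor of $\chi(A; x)$ mod $p$ arises from $(x^2 - \mu_0)^2$. I then split into two cases. If $\mu_0 \equiv \lambda_0^2 \pmod{p}$ is a quadratic residue with $\lambda_0 \not\equiv 0$, then any $\phi(x)$ furnished by Proposition \ref{dfphi} is linear, $\phi(x) \in \{x - \lambda_0,\, x + \lambda_0\}$. Otherwise $\phi(x) = x^2 - \mu_0$ is the unique irreducible multiple factor (of degree $2$).

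Next I apply Proposition \ref{strict}. In the linear case it gives $p^2 \mid \det(A \mp \lambda_0 I) = (-1)^n \chi(A; \pm \lambda_0) = (-1)^n (\pm \lambda_0)^\delta \chi(BB^\T; \mu_0)$, and since $\lambda_0 \not\equiv 0 \pmod{p}$ we obtain $p^2 \mid \chi(BB^\T; \mu_0)$. In the quadratic case Proposition \ref{strict} yields $p^3 \mid \det(A^2 - \mu_0 I)$. Using the block decomposition $A^2 = \diag(BB^\T, B^\T B)$ together with the identity $\chi(B^\T B; x) = x^\delta \chi(BB^\T; x)$ (which follows by comparing the two expressions for $\chi(A; x)$ in Lemma \ref{chiab}), a short determinant computation gives
\[
\det(A^2 - \mu_0 I) = (-1)^n \mu_0^\delta \chi(BB^\T; \mu_0)^2,
\]
and since $\mu_0 \not\equiv 0 \pmod{p}$ this again forces $p^2 \mid \chi(BB^\T; \mu_0)$.

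Finally, I derive the contradiction from $p^2 \mid \chi(BB^\T; \mu_0) = \pm \det(BB^\T - \mu_0 I)$. By Lemma \ref{d2}, this implies that the equation $\chi(BB^\T; x) u(x) \equiv \chi'(BB^\T; x) v(x) \pmod{p^2}$ has a solution $(u, v)$ with $u, v \not\equiv 0 \pmod{p}$. Encoding this as $S^\T \eta \equiv 0 \pmod{p^2}$ for the Sylvester matrix $S = S(\chi(BB^\T; x), \chi'(BB^\T; x))$ and applying Lemma \ref{snf}(iv), we see that $p^2$ divides the largest invariant factor of $S$, hence $p^2 \mid |\det S| = |\Delta_{BB^\T}|$, contradicting $p^2 \nmid \Delta_{BB^\T}$. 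The main obstacle is the non-residue case, which needs the block-structure evaluation of $\det(A^2 - \mu_0 I)$ and careful bookkeeping of the parity parameter $\delta$ via $\chi(B^\T B; x) = x^\delta \chi(BB^\T; x)$; the residue case runs in close analogy with the signed-tree argument underlying Theorem \ref{main2}.
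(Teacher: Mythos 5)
Your argument follows essentially the same route as the paper's: Lemma \ref{dd} together with Lemma \ref{abb} to get $p\mid\Delta_{BB^\T}$, Lemma \ref{d} to produce the double factor $(x-\mu_0)^2$ with $\mu_0\not\equiv 0$, a case split on whether $x^2-\mu_0$ splits over $\mathbb{F}_p$, Proposition \ref{strict} to force $p^2\mid\chi(BB^\T;\mu_0)$, and finally Lemma \ref{d2} with the Sylvester-matrix/Smith-normal-form argument to contradict $p^2\nmid\Delta_{BB^\T}$. One genuinely different (and perfectly valid) detail: where the paper certifies that $x^2-\mu_0$ accounts for all multiple factors of $\chi(A;x)$ by combining $p^3\nmid\Delta_A$ with Lemma \ref{degker} to bound $\deg\gcd(\chi(A;x),\chi'(A;x))$ by $2$, you deduce it directly from $\psi$ being squarefree with $\psi(0),\psi(\mu_0)\not\equiv 0$ (using that $\psi(x^2)$ inherits squarefreeness in odd characteristic); this is more self-contained, though you should state explicitly that $\psi(x^2)$ is squarefree.

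There is, however, one real gap, in the residue case. You write $\det(A\mp\lambda_0 I)=(-1)^n(\pm\lambda_0)^\delta\chi(BB^\T;\mu_0)$, but Lemma \ref{chiab} actually gives $\chi(A;\pm\lambda_0)=(\pm\lambda_0)^\delta\chi(BB^\T;\lambda_0^2)$, and the integer $\lambda_0^2$ only agrees with $\mu_0$ modulo $p$, not necessarily as an integer. Divisibility by $p^2$ is not preserved under congruence modulo $p$, so $p^2\mid\chi(BB^\T;\lambda_0^2)$ does not by itself yield the hypothesis $p^2\mid\det(BB^\T-\mu_0 I)$ needed to invoke Lemma \ref{d2}. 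The paper closes exactly this loophole with Remark \ref{nid}, which says that under the hypotheses of Lemma \ref{d2} the truth of $p^2\mid\det(M-\lambda I)$ depends only on the residue class of $\lambda$ modulo $p$; this licenses replacing $\mu_0$ by the integer $\lambda_0^2$ (equivalently, choosing the representative $\mu_0:=\lambda_0^2$ from the outset). You need to add that one step; with it, your proof is complete.
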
  

The proofs of these Propositions \ref{p1} and \ref{p2} will be presented in the following two subsections. 
It turns out  that Theorem \ref{main} is an easy consequence of these two propositions.

\noindent\textbf{Proof of Theorem \ref{main}}\quad   Suppose that $2^{-\lfloor n/2\rfloor}\sqrt{\Delta_\Sigma}$ is squarefree. Let $Q$ be any matrix in $\mathcal{Q}(\Sigma)$. By Lemma \ref{abb}, $\Delta_{BB^\T}=2^{-\lfloor n/2\rfloor}\sqrt{\Delta_\Sigma}$ and hence is squarefree. In particular, $4\nmid \Delta_{BB^\T}$, i.e., $\Delta_{BB^\T}\not\equiv  0\pmod{4}$ . By Proposition \ref{not2}, $\Delta_{BB^\T}\not\equiv 2\pmod{4}$. It follows that  $\Delta_{BB^\T}\equiv 0,1\pmod{4}$, i.e., $\Delta_{BB^\T}$ is odd.  Proposition \ref{p1} implies that $\ell(Q)$ must be odd. Moreover, as $\Delta_{BB^\T}$ is squarefree, Proposition \ref{p2} implies that $\ell(Q)$ has no odd prime factor. Thus, $\ell(Q)=1$ and hence   $\Sigma$ is DGS by Lemma  \ref{dgslev}. This completes the proof of Theorem \ref{main}. \hfill \qedsymbol

\begin{remark}\normalfont{
As a byproduct of the proof of Theorem \ref{main}, we note that if $\Sigma$ is almost controllable and satisfies the conditions of Theorem \ref{main}, then $\mathcal{Q}(\Sigma)$ contains only permutation matrix and hence $\Sigma$ must have a nontrivial automorphism.}
\end{remark}

	\subsection{The case $p=2$}
The main aim of this subsection is to prove Proposition \ref{p1}. We fix $ p=2 $ here, and for simplicity, we omit the subscript $p$ for some notations. For example, $\col(M)$ means $\col_2(M)$, which is a subspace of $\mathbb{F}_2^n$.
Let $ v \in \mathbb{Z}^n $ be an integer vector and $V$ be a subspace of $\mathbb{F}_2^n$. By slight abuse of notation, we write $v \in V $ to mean that the reduction of $v$ modulo $2$ lies in $V$.
  \begin{lemma}[\cite{wang2013ElJC}]\label{m4}
Suppose $Q\in \mathcal{Q}(\Sigma)$ with even level. Then for any integer vector $q\in \col(\hat{Q})\subset\mathbb{F}_2^n$ and any nonnegative integer $k$, we have $q^\T A^{k} q\equiv 0\pmod{4}$.
\end{lemma}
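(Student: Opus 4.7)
The plan is to reduce this congruence to a single clean computation that pivots on the identity
\[
\hat Q^\T A^k \hat Q \;=\; \ell^2\, (Q^\T A Q)^k,
\]
which follows from $\hat Q = \ell Q$ together with the orthogonality $QQ^\T = I$ (used to insert $I$'s between the $A$'s so that $Q^\T A^k Q = (Q^\T A Q)^k$). Since $Q^\T A Q$ lies in $\S_n(\mathbb{Z})$ by the very definition of $\mathcal{Q}(\Sigma)$, each of its powers is again a symmetric integer matrix, so every entry of $\hat Q^\T A^k \hat Q$ is divisible by $\ell^2$. The entire role of the even-level hypothesis is to upgrade this to divisibility by $4$.

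Next I would unpack the membership $q \in \col(\hat Q) \subset \mathbb{F}_2^n$: interpreting $q$ as an integer vector, this simply means $q \equiv \hat Q x \pmod 2$ for some integer vector $x$, so that $q = \hat Q x + 2y$ for some $y \in \mathbb{Z}^n$. Expanding the quadratic form and using the symmetry of $A^k$ to merge the two cross terms gives
\[
q^\T A^k q \;=\; x^\T (\hat Q^\T A^k \hat Q)\, x \;+\; 4\, x^\T \hat Q^\T A^k y \;+\; 4\, y^\T A^k y.
\]

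Each summand is visibly in $4\mathbb{Z}$: the first by the divisibility-by-$\ell^2$ observation together with the evenness of $\ell$, and the other two by the explicit factor of $4$ in front. This yields $q^\T A^k q \equiv 0 \pmod 4$ for every $k \ge 0$ uniformly, since the argument never inspects $k$ beyond the symmetry of $A^k$. I do not expect a genuine obstacle here; the only mild subtlety is keeping straight the interpretation of the mod-$2$ column-space membership when lifting $q$ back to $\mathbb{Z}^n$, which is exactly what dictates the decomposition $q = \hat Q x + 2y$ that makes the three terms line up.
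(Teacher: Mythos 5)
Your proof is correct and complete: the identity $\hat Q^\T A^k \hat Q = \ell^2 (Q^\T A Q)^k$ with $(Q^\T A Q)^k$ integral, the lift $q=\hat Q x+2y$, and the merging of the cross terms via symmetry of $A^k$ together give exactly the claimed divisibility by $4$. The paper itself offers no proof of this lemma (it is quoted from \cite{wang2013ElJC}), and your argument is in substance the standard one from that source, so there is nothing further to reconcile.
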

In the following, we always assume $\ell(Q)$ is even. Thus, by Lemma \ref{ainv} for $p=2$, we know that $\col(\hat{Q})$ is a nonzero and totally isotropic $A$-invariant subspace. It follows from Lemma \ref{pd} that the characteristic polynomial $\chi(A;x)\in \mathbb{F}_2[x]$ has a multiple factor $\phi(x)$ such that
 $$\col(\hat{Q})\cap \ker \phi(A)\neq 0.$$
We shall show that $\phi(x)$ is also a multiple factor of $\chi(BB^\T;x)$, which clearly completes the Proposition \ref{p1} by  Lemma \ref{mf}. We first show that $\phi(x)$ is indeed a factor of $\chi(BB^\T ;x)$.

\begin{lemma}\label{rt}
	$\phi(x)\mid \chi(BB^\T;x)$ and $\phi(0)=1$ over $\mathbb{F}_2$.
\end{lemma}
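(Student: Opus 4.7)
The plan is to reduce the identity $\chi(A;x)=x^\delta\chi(BB^\T;x^2)$ from Lemma \ref{chiab} modulo $2$ and exploit the Frobenius endomorphism in characteristic $2$. First I observe that in $\mathbb{F}_2[x]$ one has the identity $\chi(BB^\T;x^2)=\chi(BB^\T;x)^2$: for any $f(x)=\sum a_ix^i\in \mathbb{F}_2[x]$, $f(x)^2=\sum a_i^2x^{2i}=\sum a_ix^{2i}$. Combining with Lemma \ref{chiab} gives the congruence
\[
\chi(A;x)\equiv x^\delta\,\chi(BB^\T;x)^2\pmod{2}.
\]
Consequently every irreducible factor of $\chi(A;x)$ over $\mathbb{F}_2$ is either the polynomial $x$ or an irreducible divisor of $\chi(BB^\T;x)$, and every irreducible divisor of $\chi(BB^\T;x)$ automatically appears with even multiplicity (at least $2$) in $\chi(A;x)\pmod 2$.

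Next I need to rule out the possibility $\phi(x)=x$. For this I invoke the fact $\det(BB^\T)=1$ which was already established inside the proof of Lemma \ref{abb} (using the hypothesis $c_\delta=\pm1$ together with the positive semidefiniteness of $BB^\T$). Since $\chi(BB^\T;0)=\pm\det(BB^\T)$ is odd, $x\nmid\chi(BB^\T;x)$ in $\mathbb{F}_2[x]$, so the multiplicity of $x$ as a factor of $\chi(A;x)\bmod 2$ is exactly $\delta\in\{0,1\}$. In particular $x$ is not a multiple factor of $\chi(A;x)$ over $\mathbb{F}_2$. Therefore the multiple irreducible factor $\phi(x)$ supplied by Proposition \ref{dfphi} must be an irreducible divisor of $\chi(BB^\T;x)$ distinct from $x$; being monic with $\phi(0)\neq 0$ in $\mathbb{F}_2$ forces $\phi(0)=1$, which is what the lemma asserts.

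In summary, the characteristic-$2$ Frobenius makes the link between multiple factors of $\chi(A;x)$ and factors of $\chi(BB^\T;x)$ essentially automatic, so the only real work is to exclude the exceptional factor $x$ coming from the $x^\delta$ contribution; this is the main (and only) obstacle, and it is overcome by the normalization $c_\delta=\pm1$ that enters through $\det(BB^\T)=1$.
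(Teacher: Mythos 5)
Your proof is correct and follows essentially the same route as the paper: both rest on the Frobenius identity $\chi(A;x)\equiv x^\delta\chi(BB^\T;x)^2\pmod 2$ and then exclude $\phi(x)=x$ using the hypothesis $c_\delta=\pm 1$. The only cosmetic difference is that you rule out $\phi=x$ via $\det(BB^\T)=1$ (borrowed from the proof of Lemma~\ref{abb}), whereas the paper argues directly that $\phi=x$ being a multiple factor would force the constant and linear coefficients of $\chi(A;x)$ to be even; both reductions are valid and rely on the same standing assumption.
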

\begin{proof}
	Since we are working over $\mathbb{F}_2$, we have $f(x^2)=f^2(x)$ when $f$ is a polynomial. Thus, the first equality in Lemma \ref{chiab} becomes 
	$\chi(A;x)=x^\delta (\chi(BB^\T;x))^2$.  Noting that $\delta\le 1$ and $\phi(x)$ is a multiple factor of $\chi(A;x)$, we must have $\phi(x)\mid \chi(BB^\T;x)$. It remains to show that $\phi(0)=1$. Suppose to the contrary that $\phi(0)=0$. Since $\phi(x)$ is a multiple factor of $\chi(A;x)$ (over $\mathbb{F}_2$), we see that the coefficients of the constant term and the linear term in $\chi(A;x)$ are both 0 over $\mathbb{F}_2$, i.e., both are even as ordinary integers. But this contradicts the requirement that  $c_\delta=\pm 1$. Thus, $\phi(0)=1$ and the proof is complete.
	\end{proof}

\begin{lemma}\label{bv}
	There exist two vectors $u\in \mathbb{F}_2^{\lfloor n/2\rfloor}$ and $v\in \mathbb{F}_2^{\lceil n/2\rceil}$ such that $Bv\neq 0$ and $\begin{pmatrix} u\\v\end{pmatrix}\in \col(\hat{Q})\cap\ker \phi(A)$.	
\end{lemma}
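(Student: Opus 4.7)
My plan is to proceed by contradiction. Suppose every vector $\begin{pmatrix}u\\v\end{pmatrix}\in W:=\col(\hat{Q})\cap\ker\phi(A)$ satisfies $Bv=0$; I will show this forces $W=\{0\}$, which contradicts the non-triviality of $W$ established just before the lemma. The key initial observation is that $W$ is $A$-invariant, being the intersection of the $A$-invariant subspaces $\col(\hat{Q})$ (by Lemma \ref{ainv}) and $\ker\phi(A)$. Hence $A\begin{pmatrix}u\\v\end{pmatrix}=\begin{pmatrix}Bv\\B^\T u\end{pmatrix}=\begin{pmatrix}0\\B^\T u\end{pmatrix}$ lies again in $W$, and applying the assumption to this new vector yields $B(B^\T u)=0$. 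Thus \emph{every} $\begin{pmatrix}u\\v\end{pmatrix}\in W$ satisfies both $Bv=0$ and $BB^\T u=0$.

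Next I would exploit the even/odd decomposition $\phi(x)=\psi(x^2)+x\rho(x^2)$ over $\mathbb{F}_2$. Since $A^2=\diag(BB^\T,B^\T B)$ and $B\rho(B^\T B)=\rho(BB^\T)B$, a short computation gives the block form
\[
\phi(A)=\begin{pmatrix}\psi(BB^\T) & \rho(BB^\T)B\\ B^\T\rho(BB^\T) & \psi(B^\T B)\end{pmatrix}.
\]
Reading off the top block row of $\phi(A)\begin{pmatrix}u\\v\end{pmatrix}=0$ and using $Bv=0$ reduces it to $\psi(BB^\T)u=0$. Since $BB^\T u=0$, every non-constant term in $\psi(BB^\T)u$ vanishes, leaving $\psi(BB^\T)u=\psi(0)u$. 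But $\psi(0)=\phi(0)=1$ by Lemma \ref{rt}, so $u=0$ for every vector of $W$.

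Consequently every element of $W$ has the form $\begin{pmatrix}0\\v\end{pmatrix}$ with $Bv=0$, so $A\begin{pmatrix}0\\v\end{pmatrix}=0$ and therefore $\phi(A)\begin{pmatrix}0\\v\end{pmatrix}=\phi(0)\begin{pmatrix}0\\v\end{pmatrix}=\begin{pmatrix}0\\v\end{pmatrix}$. Since this must also equal $0$, we obtain $v=0$, whence $W=\{0\}$---the desired contradiction. The step I expect to be most delicate is the second one: obtaining the block form of $\phi(A)$ via the even/odd split, and then observing that the two derived constraints $Bv=0$ and $BB^\T u=0$, together with $\phi(0)=1$, leave no room for a non-trivial vector in $W$. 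After that, the collapse to $u=v=0$ is essentially bookkeeping.
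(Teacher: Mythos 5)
Your proof is correct, and its skeleton is the same as the paper's: both exploit the $A$-invariance of $W=\col(\hat{Q})\cap\ker\phi(A)$ to pass from $\begin{pmatrix}u\\v\end{pmatrix}$ to $A\begin{pmatrix}u\\v\end{pmatrix}=\begin{pmatrix}Bv\\B^\T u\end{pmatrix}$, and both hinge on $\phi(0)=1$ from Lemma \ref{rt}. The difference is one of packaging: the paper argues directly and constructively --- take any nonzero $q\in W$; if its top block fails the requirement, then $Aq$ succeeds, since $Aq\neq 0$ (otherwise $0=\phi(A)q=\phi(0)q=q$) and $Aq$ has zero top block, so one can quote the ``$u=0$ case'' for it --- whereas you argue by contradiction and extract the key conclusion $u=0$ from the block form of $\phi(A)$ obtained via the even/odd split $\phi(x)=\psi(x^2)+x\rho(x^2)$. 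That computation is carried out correctly (the identity $B\rho(B^\T B)=\rho(BB^\T)B$ and the resulting block matrix are right), but it is heavier machinery than needed: once you know every $\begin{pmatrix}u\\v\end{pmatrix}\in W$ satisfies $Bv=0$ and $BB^\T u=0$, you have $A^2q=0$ for every $q\in W$, hence $0=\phi(A)q=q+a_1Aq$ with $a_1$ the linear coefficient of $\phi$; since the top block of $Aq$ is $Bv=0$, comparing top blocks gives $u=0$ at once. The paper's route buys brevity and an explicit witness (either $q$ or $Aq$); your route buys nothing extra for this lemma, though the block identity for $\phi(A)$ is a tidy reusable fact in the same spirit as the computation of $\phi(A^2)$ in the proof of Lemma \ref{mf}.
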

\begin{proof}
	Let $q=(q_1,\ldots,q_n)^\T\in \mathbb{F}_2^n$ be any nonzero vector in $\col(\hat{Q})\cap \ker \phi(A).$  Denote $u=(q_1,q_2,\ldots,q_{\lfloor n/2\rfloor})^\T$ and $v=(q_{\lfloor n/2\rfloor+1},\ldots,q_{n-1},q_{n})^\T$.

	\noindent\textbf{Claim 1}: $Aq\in \col(\hat{Q})\cap \ker \phi(A)$ and $Aq \neq0$.
	
	The first assertion should be clear as both $\col(\hat{Q})$ and $\ker \phi(A)$ are $A$-invariant. It remains to show $Aq\neq 0$. Suppose to the contrary that $Aq=0$. Then we  have  $\phi(A)q=\phi(0)q$. By Lemma \ref{rt}, we have $\phi(0)q=q$ and hence is nonzero. But clearly, $\phi(A)q=0$. This is a contradiction and hence Claim 1 follows.

	\noindent\textbf{Claim 2}: If $u=0$ then $Bv\neq 0$.
		 
	 As $	 	A=\begin{pmatrix} 
	 		0   & B \\
	 		B^\T  & 0 
	 	\end{pmatrix}$ and $q=\begin{pmatrix}u\\v\end{pmatrix}=\begin{pmatrix}0\\v\end{pmatrix}$, we see that $Aq=\begin{pmatrix}Bv\\ 0\end{pmatrix}$. Thus, by Claim 1, $Bv\neq 0$ and hence Claim 2 follows.

By Claim 2, to complete the proof of Lemma \ref{bv}, it suffices to consider the case that $u\neq 0$ and $Bv=0$ hold simultaneously. Now let $\tilde{q}=Aq$ and let $\tilde{u}\in \mathbb{F}_2^{\lfloor n/2\rfloor}$ and  $\tilde{v}\in \mathbb{F}_2^{\lceil n/2\rceil}$ such that $\tilde{q}=\begin{pmatrix} \tilde{u}\\\tilde{v}\end{pmatrix}$. Then we have
\begin{equation*}
	\begin{pmatrix} \tilde{u}\\ \tilde{v}\end{pmatrix}=\begin{pmatrix} 
		0   & B \\
		B^\T  & 0 
	\end{pmatrix}\begin{pmatrix}u\\v\end{pmatrix}=\begin{pmatrix}Bv\\B^\T u\end{pmatrix}=\begin{pmatrix}0\\B^\T u\end{pmatrix}.
\end{equation*}
Now we show that $\tilde{u}$ (which is 0) and $\tilde{v}$ satisfy all requirements of Lemma \ref{bv}. By Claim 1, we know that $\tilde{q}$ is a nonzero vector in $\col(\hat{Q})\cap \ker \phi(A)$. Consequently, using Claim 2 for the vector $\tilde{q}=\begin{pmatrix} \tilde{u}\\\tilde{v}\end{pmatrix}=\begin{pmatrix} 0\\\tilde{v}\end{pmatrix}$, we obtain that $B\tilde{v}\neq 0$. This completes the proof of Lemma \ref{bv}.
\end{proof}
\begin{lemma}\label{mf}
 $\phi^2(x)\mid\chi(BB^\T;x)$ over $\mathbb{F}_2$.
\end{lemma}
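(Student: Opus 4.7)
The plan is to lift the vector provided by Lemma \ref{bv} to a nonzero $BB^\T$-invariant subspace of $\ker\phi(BB^\T)$ over $\mathbb{F}_2$, and then conclude via Lemma \ref{pd}(i) applied to $M=BB^\T$.

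First I would take $\begin{pmatrix}u\\v\end{pmatrix}\in\col(\hat{Q})\cap\ker\phi(A)$ with $Bv\neq 0$ from Lemma \ref{bv}, and replace it by $q'=A\begin{pmatrix}u\\v\end{pmatrix}=\begin{pmatrix}Bv\\ B^\T u\end{pmatrix}$. Since $\col(\hat{Q})$ is $A$-invariant (Lemma \ref{ainv}) and $\ker\phi(A)$ is trivially $A$-invariant, $q'$ still lies in $\col(\hat{Q})\cap\ker\phi(A)$, and its top block is the nonzero vector $Bv$.

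Next comes the key observation: over $\mathbb{F}_2$ one has the Frobenius identity $\phi(A)^2=\phi(A^2)$, since the coefficients of $\phi$ lie in $\mathbb{F}_2$ and powers of $A$ commute. Thus for any $\begin{pmatrix}w\\v''\end{pmatrix}\in\col(\hat{Q})\cap\ker\phi(A)$,
\begin{equation*}
\phi(A^2)\begin{pmatrix}w\\v''\end{pmatrix}=\phi(A)^2\begin{pmatrix}w\\v''\end{pmatrix}=0.
\end{equation*}
Since $A^2=\diag(BB^\T,B^\T B)$, the matrix $\phi(A^2)$ is block-diagonal with blocks $\phi(BB^\T)$ and $\phi(B^\T B)$, so the top block of the above identity reads $\phi(BB^\T)w=0$. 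I would then define
\begin{equation*}
U=\Bigl\{w\in\mathbb{F}_2^{\lfloor n/2\rfloor}:\begin{pmatrix}w\\v''\end{pmatrix}\in\col(\hat{Q})\cap\ker\phi(A)\text{ for some }v''\Bigr\}.
\end{equation*}
Then $U\subseteq\ker\phi(BB^\T)$ by what was just shown, and $U$ is $BB^\T$-invariant because $\col(\hat{Q})\cap\ker\phi(A)$ is $A^2$-invariant. By the first paragraph, $Bv\in U$, so $U\neq 0$.

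Finally I would argue by contradiction. Suppose $\phi^2\nmid\chi(BB^\T;x)$ over $\mathbb{F}_2$. By Lemma \ref{rt}, $\phi\mid\chi(BB^\T;x)$, so $\phi$ is then a simple factor of $\chi(BB^\T;x)$. Lemma \ref{pd}(i) forces $U\cap\ker\phi(BB^\T)=0$, and combined with $U\subseteq\ker\phi(BB^\T)$ this gives $U=0$, contradicting $Bv\in U\setminus\{0\}$. The delicate point is the reduction from $\ker\phi(A)$ to $\ker\phi(BB^\T)$, which relies crucially on the characteristic-$2$ identity $\phi(A)^2=\phi(A^2)$; this is the main reason why the case $p=2$ of the main theorem requires a treatment different from the odd-prime case.
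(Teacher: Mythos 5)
Your setup (passing to $q'=Aq$, the Frobenius identity $\phi(A)^2=\phi(A^2)$ over $\mathbb{F}_2$, the block-diagonal form of $A^2$, and the conclusion $\phi(BB^\T)w=0$ for the top block) faithfully reproduces the first part of the paper's argument. But the final step has a genuine gap: Lemma \ref{pd}(\rmnum{1}) cannot be applied to an arbitrary $M$-invariant subspace $U$ --- it requires $U$ to be \emph{totally isotropic}. Read literally without that hypothesis the statement would be self-refuting, since $\ker\phi_i(M)$ is itself a nonzero $M$-invariant subspace meeting $V_i$ nontrivially whenever $\phi_i\mid\chi(M;x)$; this is also why your contradiction arrives so cheaply (you build a nonzero $U$ inside $\ker\phi(BB^\T)$ and then quote a lemma saying no invariant subspace meets $\ker\phi(BB^\T)$). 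The paper only ever invokes Lemma \ref{pd} for totally isotropic invariant subspaces (cf.\ Lemma \ref{ainv} and the last paragraph of its proof of Lemma \ref{mf}), and you never verify total isotropy of your $U$. Nor does it come for free: $U$ is the projection of the totally isotropic space $\col(\hat{Q})\cap\ker\phi(A)$ onto the first block, and $w_1^\T w_2+v_1^\T v_2=0$ does not imply $w_1^\T w_2=0$.

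Establishing this isotropy is precisely the core of the paper's proof (its Claim 2), and it needs two ingredients you have not used. First, since $\phi$ is an irreducible \emph{simple} factor of $\chi(BB^\T;x)$, the space $\ker\phi(BB^\T)$ is cyclic, so it is spanned by $\{(BB^\T)^iBv\}_i$ and also by $\{(BB^\T)^iu\}_i$ when $u\neq 0$; this reduces isotropy to computing inner products of the form $v^\T(B^\T B)^{i+j+1}v$ and $u^\T(BB^\T)^{i+j}Bv$. Second, these are controlled by the mod-$4$ congruences of Lemma \ref{m4}: for the cross terms one has $q^\T A^{2k+1}q=2\,u^\T(BB^\T)^kBv\equiv 0\pmod 4$, whence $u^\T(BB^\T)^kBv\equiv 0\pmod 2$ --- information that is invisible working purely modulo $2$. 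Without this $2$-adic input your argument does not close, so the proposal as written does not prove the lemma.
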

\begin{proof}
	Suppose to the contrary that $\phi^2(x)\nmid\chi(BB^\T;x)$. Then Lemma \ref{rt} implies that $\phi(x)$ is a simple factor of $\chi(BB^\T;x)$. According to Lemma \ref{bv}, there exists a vector $q=\begin{pmatrix} u\\v\end{pmatrix}\in \mathbb{Z}^n$ such that $Bv\not\equiv 0\pmod{2}$ and the reduction of $q$ over $\mathbb{F}_2$ lies in $\col(\hat{Q})\cap \ker \phi(A)$.
	
	\noindent\textbf{Claim 1}: $u,Bv\in \ker \phi(BB^\T)$, i.e., $\phi(BB^\T) u\equiv \phi(BB^\T)Bv\equiv 0\pmod{2} $.
	
	Since $f(x^2)\equiv f^2(x)\pmod{2}$ for any $f\in \mathbb{Z}[x]$, we have
	$$\phi(A^2)\begin{pmatrix}u\\v\end{pmatrix}\equiv \phi(A)\phi(A)\begin{pmatrix}u\\v\end{pmatrix}\equiv 0\pmod{2}.$$
	On the other hand, noting that $A^2=\begin{pmatrix} 
		0   & B \\
		B^\T  & 0 
	\end{pmatrix}^2=\begin{pmatrix} 
	BB^\T   & 0 \\
	 0 & B^\T B 
	\end{pmatrix}$, we obtain 
	
		$$\phi(A^2)\begin{pmatrix}u\\v\end{pmatrix}\equiv \begin{pmatrix} 
			\phi(BB^\T)   & 0 \\
			0 & \phi(B^\T B) 
		\end{pmatrix} \begin{pmatrix}u\\v\end{pmatrix}\equiv\begin{pmatrix}\phi(BB^\T) u\\ \phi(B^\T B)v\end{pmatrix} \pmod{2}.$$
	It follows that $\phi(BB^\T) u\equiv 0\pmod{2}$. 
	
	 Note that $\begin{pmatrix} Bv\\B^\T u\end{pmatrix}=\begin{pmatrix}0&B\\B^\T&0\end{pmatrix}\begin{pmatrix}u\\v\end{pmatrix}=Aq\in \ker \phi(A)$. The same argument indicates that $\phi(BB^\T)(Bv)\equiv 0\pmod{2}$.  Claim 1 follows.
	 
	 Since the irreducible polynomial  $\phi(x)$ is a simple  factor of $\chi(BB^T;x)$, we see that $\dim \ker \phi(A)=\deg \phi(x)$ and  $\ker \phi(A)$ is a cyclic subspace generated by any nonzero vector in it. Let $d=\deg \phi(x)$. Note that $Bv\neq 0$. Then it follows from  Claim 1 that (over $\mathbb{F}_2$)
	 \begin{equation}\label{sp1}
	 	\ker \phi(BB^\T)=\span\{Bv,(BB^\T)Bv,\ldots,(BB^\T)^{d-1}Bv\}
	 \end{equation}
	 and 
	 \begin{equation}\label{sp2}
	 	\ker \phi(BB^\T)=\span\{u,(BB^\T)u,\ldots,(BB^\T)^{d-1}u\} \text{~when~} u\neq 0.
	 \end{equation}
	 \noindent\textbf{Claim 2}: $\ker \phi(BB^\T)$ is totally isotropic.
	 
	 We first consider the case that $u\equiv 0\pmod{2}$. As $\begin{pmatrix} 0\\v\end{pmatrix}\in \col(\hat{Q})$,  Lemma \ref{m4} implies that, for any $k\ge 0$,
	 \begin{equation}\label{vbv}
	 	(0^\T,v^\T)\begin{pmatrix}0&B\\B^\T&0\end{pmatrix}^{2k}\begin{pmatrix}0\\v\end{pmatrix}\equiv 0\pmod 4, \text{~i.e.,~}  v^\T (B^\T B)^k v\equiv 0\pmod{4}.
	 \end{equation}
	   Note that $Bv, (BB^\T)Bv,\ldots,(BB^\T)$ constitute a basis of $\ker \phi(BB^\T)$. To show that $\ker \phi(BB^\T)$ is totally isotropic, it suffices to show that any two vectors $\alpha$ and $\beta$ in the basis, say $\alpha=(BB^\T)^i Bv$ and  $\beta=(BB^\T)^j Bv$,  are orthogonal over $\mathbb{F}_2$. Direct computation shows that  
	   $$\alpha^\T \beta=((BB^\T)^iBv)^\T(BB^\T)^j Bv)=v^\T(B^\T B)^{i+j+1}v.$$
	   Thus, by Eq.~\eqref{vbv}, we have $\alpha^\T \beta\equiv 0\pmod{4}$, which clearly implies  $\alpha^\T \beta\equiv 0\pmod{2}$, or equivalently, $\alpha \perp \beta$ over $\mathbb{F}_2$. This proves Claim 2 for the case that $u\equiv 0\pmod{2}$.
	   
	   Now assume $u\not\equiv 0\pmod{2}$. By Eqs.~\eqref{sp1} and \eqref{sp2}, it suffice to show that 
	   \begin{equation}\label{buv}
	 (BB^\T)^i u\perp  (BB^\T)^j Bv,  \text{~over~} \mathbb{F}_2,\text{~for any~} i,j\ge 0.
	\end{equation}
	    As $\begin{pmatrix}u\\v\end{pmatrix}\in \col(\hat{Q})$, Lemma \ref{m4} implies that, for any $k\ge 0$,
	     \begin{equation*}
	    	(u^\T,v^\T)\begin{pmatrix}0&B\\B^\T&0\end{pmatrix}^{2k+1}\begin{pmatrix}u\\v\end{pmatrix}=(u^\T,v^\T)\begin{pmatrix}(BB^\T)^k&0\\0&(B^\T B)^k\end{pmatrix}\begin{pmatrix}Bv\\B^\T u\end{pmatrix}\equiv 0\pmod{4},
	    \end{equation*}
	    i.e., $u^\T (BB^\T)^k Bv+v^\T (B^\T B)^k B^\T u\equiv 0\pmod{4}$. Since the two additive terms on the left-hand side are equal (as can be seen by taking the transpose of one term), we obtain $u^\T (BB^\T)^k Bv\equiv 0\pmod{2}$. Taking $k=i+j$, it follows that
	       $$((BB^\T)^i u)^\T 	(BB^\T)^j Bv =u^\T(BB^\T)^{i+j}Bv\equiv 0\pmod{2},$$
	    that is, Eq.~\eqref{buv} holds. This completes the proof of Claim 2.
	    
	    Let $M=BB^\T$ and $U=\ker \phi(M)$. Clearly $U$ is $M$-invariant. By Claim 2, $U$ is totally isotropic.  As $\phi(x)$ is a simple factor of $\chi(M;x)$, we find that $U\cap \ker \phi(M)=0$ by Lemma \ref{pd}. This is a contradiction as $U=\ker \phi(M)$ and is nonzero. The proof of Lemma \ref{mf} is complete.
\end{proof}
\noindent\textbf{Proof of Proposition \ref{p1}} Let $f= \chi(BB^\T;x)\in \mathbb{F}_2[x]$. We know from Lemma \ref{mf}, that $\phi(x)$ is a multiple factor of $f(x)$ over $\mathbb{F}_2$. Thus, Lemma \ref{mfbasic} implies that $2\mid \Delta(f)$, completing the proof. \hfill\qedsymbol

	\subsection{The case $p$ is odd}
	The main aim of this subsection is to prove Proposition \ref{p2} by contradiction.
	
	\noindent\textbf{Proof of Proposition \ref{p2}}
	 Suppose to the contrary that there exists an odd prime $p$ such that $p\mid\ell(Q)$ but $p^2\nmid \Delta_{BB^\T}$. Then by Lemma \ref{dd}, we have $p\mid\Delta _{A}$.  On the other hand, we know from Lemma \ref{abb} that $\Delta_A=4^{\lfloor n/2\rfloor}\Delta^2_{BB^\T}$. Thus, $p\mid \Delta_{BB^\T}$ as $p$ is an odd prime.  It follows from Lemma \ref{d} that there exists an integer $\lambda_0$ and a polynomial $\phi(x)$ such that $\chi(BB^\T;x)=(x-\lambda_0)^2\varphi(x)$ over $\mathbb{F}_p$. Noting that $\chi(A;x)=x^\delta \chi(BB^\T;x^2)$ by Lemma \ref{chiab}, we obtain $\chi(A;x)=x^\delta (x^2-\lambda_0) ^2 \varphi(x^2)$.  Denote $\psi(x)=x^\delta\varphi(x^2)$. Then we can write 
	\begin{equation}\label{af}
		\chi(A;x)=(x^2-\lambda_0)^2 \psi(x) \text{~over~} \mathbb{F}_p.
		\end{equation}
	
Since the constant term or linear coefficient of $\chi(A;x)$ is $\pm 1$ (and hence nonzero modulo $p$), we find from Eq.~\eqref{af} that $\lambda_0\not\equiv 0\pmod{p}$. Let $S$ be the Sylvester matrix of $\chi(A;x)$ and its derivative  $\chi'(A;x)$. Noting that $p^2\nmid \Delta_{BB^\T}$ and $\Delta_A=4^{\lfloor n/2\rfloor}\Delta^2_{BB^\T}$, we obtain $p^3\nmid \Delta_A$, or equivalently, $p^3\nmid \det S$.  It follows from Lemma \ref{snf} that  $\dim\ker S\le 2$ over $\mathbb{F}_p$. Consequently, by Lemma \ref{degker}, we obtain \begin{equation*}
\deg \gcd(\chi(A;x),\chi'(A;x))=\dim\ker S\le 2.
\end{equation*}
Therefore, we see from Eq.~\eqref{af} that $\psi(x)$ is squarefree and coprime to $(x^2-\lambda_0)$, since otherwise the polynomial $\gcd(\chi(A;x),\chi'(A;x)$ would have degree at least 3, a contradiction.

\noindent\textbf{Claim}:  $p^2\mid \det(\lambda_0 I-BB^\T)$.

We prove the Claim by considering  two cases:

\noindent\emph{Case 1}: $x^2-\lambda_0$ is irreducible over $\mathbb{F}_p$.

As $A^2=\diag(BB^\T,B^\T B)$, we find that $\chi(A^2;x)=\chi(BB^\T;x)\chi(B^\T B;x)$ and hence
\begin{equation}\label{a2b}
\chi(A^2;\lambda_0)=\chi(BB^\T;\lambda_0)\chi(B^\T B;\lambda_0).
\end{equation} By Lemma \ref{chiab}, we have $\chi(B^\T B;x^2)=(x^2)^\delta \chi(BB^\T;x^2)$, i.e., $\chi(B^\T B;x)=x^\delta \chi(BB^\T;x)$. Taking $x=\lambda_0$, we obtain $\chi(B^\T B;\lambda_0)=\lambda_0^\delta \chi(BB^\T;\lambda_0)$. Noting that $\lambda_0\neq 0\pmod{p}$, we find that, for any $k\ge 1$, 
\begin{equation}\label{pkb}
p^k\mid \chi(BB^\T;\lambda_0) \text{~if and only if~} p^k\mid \chi(B^\T B;\lambda_0).
\end{equation}
On the other hand, since $\psi(x)$ is squarefree coprime to $x^2-\lambda_0$, we see that $\phi(x)=(x^2-\lambda_0)$ is the only multiple factor of $\chi(A;x)$. It follows from Proposition \ref{strict} that $p^3\mid \det\phi(A)$, i.e., $p^3\mid \chi(A^2;\lambda_0)$. This, combining with Eq.~\eqref{a2b} and Eq.~\eqref{pkb} for $k=2$, leads to $p^2\mid\chi(BB^\T;\lambda_0)$.

\noindent\emph{Case 2}: $x^2-\lambda_0$ is reducible over $\mathbb{F}_p$, i.e., $x^2-\lambda_0\equiv (x-\lambda_1)(x+\lambda_1)\pmod{p}$ for some $\lambda_1\in \mathbb{Z}$.

According to Remark \ref{nid}, the truth of the Claim does not dependent on the choice of $\lambda_0$ in its residue class modulo $p$. Consequently, since $\lambda_0\equiv \lambda_1^2\pmod{p}$, we may safely assume $\lambda_0=\lambda_1^2$. It follows from Lemma \ref{chiab} that 
\begin{equation}\label{abl}
	\chi(A;\lambda_1)=\lambda_1^\delta \chi(BB^\T;\lambda_0) \text{~and~} 	\chi(A;-\lambda_1)=(-\lambda_1)^\delta \chi(BB^\T;\lambda_0).
\end{equation}
As $\lambda_0\not\equiv 0\pmod {p}$, we see that $\lambda_1\not\equiv 0\pmod{p}$.  Note that $\chi(A;x)$ has exactly two multiple factors $\phi_1(x)=x-\lambda_1$ and $\phi_2(x)=x+\lambda_1$. It follows from Proposition \ref{strict} that $p^2\mid \det\phi_1(A)$ or $p^2\mid \det\phi_2(A)$. Nevertheless, either implies $p^2\mid \chi(BB^\T;\lambda_0)$  according to Eq.~\eqref{abl}. 

This proves the Claim. 	By Lemma \ref{d2}, the equation
\begin{eqnarray*}
	\chi(BB^\T;x)u(x)\equiv \chi(BB^\T;x)' v(x)\pmod{p^2}
\end{eqnarray*}
has a solution $(u(x),v(x))\in(\mathbb{Z}[x])^2$ satisfying:
$$u(x),v(x)\not\equiv0\pmod p,\text{~}\deg(u(x))<\lfloor n/2\rfloor-1, \text{~and~}  \deg(v) <\lfloor n/2\rfloor.$$
Let $S$ be the Sylvester matrix of $\chi(BB^\T;x)$ and its derivative $\chi'(BB^\T;x)$. Using a similar argument as in the proof of Lemma  \ref{degker}, the existence of a solution $(u,v)$ described above means that the linear equations 
$
S^\T\eta\equiv0\pmod{p^2}$ has a solution  $\eta\not\equiv0\pmod{p}$.

On the other hand, as $p^2\nmid \Delta_{BB^\T}$ and $\Delta_{BB^\T}=\pm \det S^\T$, we have  $p^2\nmid \det S^\T$.  Consequently, $p^2\nmid d_n$, where $d_n$ denotes the last invariant of $S^\T$. This contradicts Lemma \ref{snf} (\rmnum{4}) and hence completes the proof of Proposition \ref{p2}.

\section{Examples}
In this section, we present some examples to illustrate Theorem \ref{main}. All  computations were performed using Wolfram Mathematica.

\noindent\textbf{Example 1}: Let $n=13$ and $\Sigma$ be the  signed bipartite graph with adjacency matrix as follows:
\begin{equation*}
	A=\begin{tiny}\left(
		\begin{array}{ccccccccccccc}
			0 & 0 & 0 & 0 & 0 & 0 & -1 & -1 & -1 & -1 & 0 & 0 & -1 \\
			0 & 0 & 0 & 0 & 0 & 0 & 1 & 0 & 0 & 1 & 0 & 0 & 0 \\
			0 & 0 & 0 & 0 & 0 & 0 & -1 & -1 & 0 & 1 & 0 & -1 & -1 \\
			0 & 0 & 0 & 0 & 0 & 0 & -1 & 1 & 0 & 0 & 0 & -1 & -1 \\
			0 & 0 & 0 & 0 & 0 & 0 & 1 & 1 & 0 & 1 & 0 & 0 & 0 \\
			0 & 0 & 0 & 0 & 0 & 0 & 0 & -1 & 1 & -1 & 0 & -1 & -1 \\
			-1 & 1 & -1 & -1 & 1 & 0 & 0 & 0 & 0 & 0 & 0 & 0 & 0 \\
			-1 & 0 & -1 & 1 & 1 & -1 & 0 & 0 & 0 & 0 & 0 & 0 & 0 \\
			-1 & 0 & 0 & 0 & 0 & 1 & 0 & 0 & 0 & 0 & 0 & 0 & 0 \\
			-1 & 1 & 1 & 0 & 1 & -1 & 0 & 0 & 0 & 0 & 0 & 0 & 0 \\
			0 & 0 & 0 & 0 & 0 & 0 & 0 & 0 & 0 & 0 & 0 & 0 & 0 \\
			0 & 0 & -1 & -1 & 0 & -1 & 0 & 0 & 0 & 0 & 0 & 0 & 0 \\
			-1 & 0 & -1 & -1 & 0 & -1 & 0 & 0 & 0 & 0 & 0 & 0 & 0 \\
		\end{array}
		\right).\end{tiny}
\end{equation*}
We obtain the factorization of $\chi(A;x)$ over $\mathbb{Q}$:
$$-x \left(x^{12}-24 x^{10}+194 x^8-679 x^6+1022 x^4-496 x^2+1\right),$$ 
along with $$\det W(\Sigma)=2^{6}\times11^3\times3413\times697913,$$
and $$2^{-n/2}\sqrt{\Delta_{\Sigma}}=107\times15259\times12978894869.$$
By Theorem~\ref{main}, we conclude that $\Sigma$ is $\mathrm{DGS}$.

    \noindent\textbf{Example 2}: Let $n=14$ and $\Sigma$ be the controllable signed bipartite graph with adjacency matrix as follows:
\begin{equation*}
	A=\begin{tiny}\left(
		\begin{array}{cccccccccccccc}
		0 & 0 & 0 & 0 & 0 & 0 & 0 & 1 & 1 & 0 & 0 & -1 & 0 & 0 \\
		0 & 0 & 0 & 0 & 0 & 0 & 0 & 0 & 1 & 0 & 0 & 0 & 0 & 0 \\
		0 & 0 & 0 & 0 & 0 & 0 & 0 & 0 & 0 & 1 & 0 & 1 & 0 & 0 \\
		0 & 0 & 0 & 0 & 0 & 0 & 0 & 0 & 0 & 0 & 1 & 0 & -1 & 0 \\
		0 & 0 & 0 & 0 & 0 & 0 & 0 & 0 & 0 & 0 & 0 & 1 & 1 & 1 \\
		0 & 0 & 0 & 0 & 0 & 0 & 0 & 0 & 0 & 0 & 0 & 0 & 1 & 0 \\
		0 & 0 & 0 & 0 & 0 & 0 & 0 & 0 & 0 & 0 & 0 & 0 & 0 & 1 \\
		1 & 0 & 0 & 0 & 0 & 0 & 0 & 0 & 0 & 0 & 0 & 0 & 0 & 0 \\
		1 & 1 & 0 & 0 & 0 & 0 & 0 & 0 & 0 & 0 & 0 & 0 & 0 & 0 \\
		0 & 0 & 1 & 0 & 0 & 0 & 0 & 0 & 0 & 0 & 0 & 0 & 0 & 0 \\
		0 & 0 & 0 & 1 & 0 & 0 & 0 & 0 & 0 & 0 & 0 & 0 & 0 & 0 \\
		-1 & 0 & 1 & 0 & 1 & 0 & 0 & 0 & 0 & 0 & 0 & 0 & 0 & 0 \\
		0 & 0 & 0 & -1 & 1 & 1 & 0 & 0 & 0 & 0 & 0 & 0 & 0 & 0 \\
		0 & 0 & 0 & 0 & 1 & 0 & 1 & 0 & 0 & 0 & 0 & 0 & 0 & 0 \\
	\end{array}
\right).\end{tiny}
\end{equation*}
The characteristic polynomial $\chi(A;x)$ factors over $\mathbb{Q}$ as:
$$\left(x^7-x^6-6 x^5+4 x^4+9 x^3-4 x^2-3 x+1\right) \left(x^7+x^6-6 x^5-4 x^4+9 x^3+4 x^2-3 x-1\right),$$ 
with $\det W(\Sigma)=2^{14}$ and $2^{-n/2}\sqrt{\Delta_{\Sigma}}=17\times23\times64879$.
We observe that this case does not satisfy the conditions of Theorem~\ref{main2}. However, by Theorem~\ref{main}, we conclude that $\Sigma$ is $\mathrm{DGS}$.

 \noindent\textbf{Example 3}: Let $n=14$ and $\Sigma$ be the almost controllable signed bipartite graph with adjacency matrix as follows:
\begin{equation*}
	A=\begin{tiny}\left(
	\begin{array}{cccccccccccccc}
		0 & 0 & 0 & 0 & 0 & 0 & 0 & 1 & 0 & 0 & -1 & 0 & -1 & -1 \\
		0 & 0 & 0 & 0 & 0 & 0 & 0 & 0 & 1 & 0 & -1 & 0 & -1 & 0 \\
		0 & 0 & 0 & 0 & 0 & 0 & 0 & 0 & 0 & 1 & 0 & 0 & 0 & 0 \\
		0 & 0 & 0 & 0 & 0 & 0 & 0 & 0 & 0 & 0 & 1 & -1 & -1 & 0 \\
		0 & 0 & 0 & 0 & 0 & 0 & 0 & 0 & 0 & 0 & 0 & 1 & 0 & -1 \\
		0 & 0 & 0 & 0 & 0 & 0 & 0 & 0 & 0 & 0 & 0 & 0 & 1 & 1 \\
		0 & 0 & 0 & 0 & 0 & 0 & 0 & 0 & 0 & 0 & 0 & 0 & 0 & 1 \\
		1 & 0 & 0 & 0 & 0 & 0 & 0 & 0 & 0 & 0 & 0 & 0 & 0 & 0 \\
		0 & 1 & 0 & 0 & 0 & 0 & 0 & 0 & 0 & 0 & 0 & 0 & 0 & 0 \\
		0 & 0 & 1 & 0 & 0 & 0 & 0 & 0 & 0 & 0 & 0 & 0 & 0 & 0 \\
		-1 & -1 & 0 & 1 & 0 & 0 & 0 & 0 & 0 & 0 & 0 & 0 & 0 & 0 \\
		0 & 0 & 0 & -1 & 1 & 0 & 0 & 0 & 0 & 0 & 0 & 0 & 0 & 0 \\
		-1 & -1 & 0 & -1 & 0 & 1 & 0 & 0 & 0 & 0 & 0 & 0 & 0 & 0 \\
		-1 & 0 & 0 & 0 & -1 & 1 & 1 & 0 & 0 & 0 & 0 & 0 & 0 & 0 \\
	\end{array}
	\right).\end{tiny}
\end{equation*}
The characteristic polynomial $\chi(A;x)$ factors over $\mathbb{Q}$ as:
$$(x-1) (x+1) \left(x^{12}-15 x^{10}+75 x^8-151 x^6+111 x^4-23 x^2+1\right),$$ 
with $\rank~W(\Sigma)=13$ and $2^{-n/2}\sqrt{\Delta_{\Sigma}}=13\times45953\times106501$.
By Theorem~\ref{main}, we conclude that $\Sigma$ is $\mathrm{DGS}$.

	\section*{Acknowledgments}
	This work was partially supported by the National Natural Science Foundation of China (Grant No.~12001006) and Wuhu Science and Technology Project, China (Grant No.~2024kj015). 	
	
\end{document}